\newcommand{\shrinkmargins}[1]{
  \addtolength{\textheight}{#1\topmargin}
  \addtolength{\textheight}{#1\topmargin}
  \addtolength{\textwidth}{#1\oddsidemargin}
  \addtolength{\textwidth}{#1\evensidemargin}
  \addtolength{\topmargin}{-#1\topmargin}
  \addtolength{\oddsidemargin}{-#1\oddsidemargin}
 \addtolength{\evensidemargin}{-#1\evensidemargin}
  }
\theoremstyle{plain}
\newtheorem{theorem}{Theorem}[section]
\newtheorem{corollary}[theorem]{Corollary}
\newtheorem{conjecture}[theorem]{Conjecture}
\newtheorem{lemma}[theorem]{Lemma}
\newtheorem{proposition}[theorem]{Proposition}
\newtheorem{question}[theorem]{Question}
\newtheorem*{teo}{Theorem}
\newtheorem*{conje}{Conjecture}
\newtheorem{definition}[theorem]{Definition}
\theoremstyle{remark}
\newtheorem{remark}[theorem]{Remark}
\theoremstyle{definition}
\newtheorem{example}[theorem]{Example}
\def \Z {\mathbb{Z}}
\def \Q {\mathbb{Q}}
\def \R { \mathbb{R}}
\def \det { \text{det}}
\def \d { \text{Disc}}
\def \tr { {\rm tr}}
\begin{document}

\thispagestyle{empty}
\setcounter{tocdepth}{7}

\title{A proof of a conjecture on trace-zero forms and shapes of number fields}
\author{Guillermo Mantilla-Soler \and Carlos Rivera-Guaca}

\date{}

\maketitle

\begin{abstract}

In 2012 the first named author conjectured that totally real quartic fields of fundamental discriminant are determined by the isometry class of the integral trace zero form; such conjecture was based on computational evidence and the analog statement for cubic fields which was proved using Bhargava's higher composition laws on cubes. Here, using Bhargava's parametrization of quartic fields we prove the conjecture by generalizing the ideas used in the cubic case. Since at the moment, for arbitrary degrees, there is nothing like Bhargava's parametrizations we cannot deal with degrees $n > 5$ in a similar fashion. Nevertheless, using some of our previous work on trace forms we generalize this result to higher degrees;  we show that if  $n \ge 3$ is an integer such that $(\Z/n\Z)^{*}$ is a cyclic group, then the shape is a complete invariant for totally real degree $n$ number fields with fundamental discriminant.
\end{abstract}

\section{Introduction}

Let $K$ be a number field of degree $n:=[K:\mathbb{Q}]$  and let $O_{K}$ be its maximal order. The {\it trace zero module} of $O_{K}$ is the $\Z$-submodule of $O_{K}$ given by the kernel of the trace map i.e., $O_{K}^{0}=K^{0} \cap O_{K}$ where $K^{0}:= \{ x \in K : \tr_{K/\mathbb{Q}}(x)=0 \}.$ The {\it integral trace-zero form} of $K$ is the isometry class of the rank $n-1$ quadratic $\Z$-module $(O_{K}^{0}, \tr_{K/\Q})$ given by restricting the trace pairing from $O_{K} \times O_{K}$  to $O_{K}^{0} \times O_{K}^{0}.$  It is clear that the isometry class of the quadratic module $(O_{K}^{0}, \tr_{K/\Q})$ determines the field $K$ for $n=1,2$. This is not the case for cubic fields, see \cite[\S 3]{Man}. However, it can be shown, using  Delone-Faddeev-Gross parametrization of cubic rings and Bhargava's higher composition laws on cubes,  that for totally real cubic fields of fundamental discriminant $(O_{K}^{0}, \tr_{K/\Q})$ determines the field (see \cite[Theorem 6.5]{Man}).  In \cite{Man2} the first named author conjectured that the above property of the trace zero form is not only particular of degrees less than 4 but also works for quartic fields (see  \cite[Conjecture 2.10]{Man2}). In Section \S \ref{QuarticParametrization} we prove such conjecture via Bhargava's parametrization of quartic rings:

\begin{teo}[cf. \S \ref{QuarticParametrization}]
Let $K$ be a totally real quartic number field with fundamental discriminant. If $L$
is a tamely ramified number field such that an isomorphism of quadratic modules
\[(O_{K}^{0}, \tr_{K/\Q}) \cong (O_{L}^{0}, \tr_{L/\Q})\] exists, then $K \cong L$.
\end{teo}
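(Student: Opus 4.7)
The approach is to adapt the strategy used for cubic fields in \cite[Theorem 6.5]{Man}, replacing the Delone--Faddeev--Gross parametrization and Bhargava's cubes by Bhargava's parametrization of quartic rings via $GL_2(\Z)\times GL_3(\Z)$-equivalence classes of pairs of integral ternary quadratic forms.

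The first step will be to pin down the basic invariants of $L$. Since $(O_K^0,\tr_{K/\Q})$ and $(O_L^0,\tr_{L/\Q})$ are isometric, their ranks agree, so $[L:\Q]=4$. The trace-zero form of a number field is positive definite if and only if the field is totally real, so $L$ is also totally real. A standard identity relates the determinant of the trace-zero Gram matrix to $\mathrm{disc}(O_K)$ up to an explicit factor (the precise form of the identity uses the tameness available for both $K$ and $L$); it yields $\mathrm{disc}(L)=\mathrm{disc}(K)$ and consequently that $L$ has fundamental discriminant.

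The second step will be to bring in Bhargava's parametrization. To the maximal orders $O_K$ and $O_L$ one attaches $GL_2(\Z)\times GL_3(\Z)$-orbits of pairs $(A_K,B_K)$ and $(A_L,B_L)$ of integer ternary quadratic forms, whose resolvent binary cubic forms $\det(A_\bullet x - B_\bullet y)$ classify the corresponding resolvent cubic rings. Because Bhargava's correspondence is a bijection onto its image, proving $K\cong L$ reduces to exhibiting an element of $GL_2(\Z)\times GL_3(\Z)$ sending $(A_K,B_K)$ to $(A_L,B_L)$; this equivalence should be read off from the given isometry of trace-zero forms.

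The heart of the proof, and the main obstacle, is to translate between the trace-zero form and the Bhargava pair. In a well-chosen integral basis of $O_K$, the Gram matrix of $(O_K^0,\tr_{K/\Q})$ should admit an explicit polynomial expression in the entries of $(A_K,B_K)$, from which the pair can be recovered up to an ambiguity that matches the automorphism group of the resolvent cubic ring. The fundamental-discriminant hypothesis ensures that this resolvent is the maximal order of an \'etale cubic algebra, with automorphism group of order at most $3$, so the residual ambiguity should be eliminable in the same spirit that Bhargava's cubes resolved the analogous step in the cubic case. This step is the direct analogue of the Hessian identity between the Delone--Faddeev--Gross binary cubic form and the trace-zero form of a cubic field used in \cite{Man}, but it now involves two ternary forms rather than a single binary cubic, so the explicit algebra is substantially heavier; the fundamental-discriminant and tame-ramification hypotheses are essential to rule out local obstructions that, in their absence, are known to prevent the trace-zero form from being a complete invariant.
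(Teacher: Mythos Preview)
Your outline correctly identifies the broad architecture---reduce to showing that the $GL_2(\Z)\times SL_3(\Z)$-orbit of the Bhargava pair $(A,B)$ attached to $O_K$ is determined by the trace-zero form---and your Step~1 matches the paper's preliminary reductions (Lemma~\ref{TameYFundaIsSquafree} forces the common discriminant to be square-free, hence odd by Stickelberger, so $(4,\d(K))=1$). But your Step~3 is where the real content lies, and the mechanism you sketch does not match what actually makes the argument go through.

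You propose to write the Gram matrix of $(O_K^0,\tr)$ as an explicit polynomial in the entries of $(A,B)$ and then ``recover'' the pair up to an ambiguity controlled by the automorphism group of the resolvent cubic ring. The paper does compute this covariant $\mathcal{Q}_{(A,B)}$ explicitly, but it does \emph{not} attempt any direct inversion, and there is no argument in the paper (or, to my knowledge, elsewhere) that the fiber of $\mathcal{Q}$ is governed by $\mathrm{Aut}(R')$ in the way you suggest. Instead, the paper takes a substantial detour: it sends $(A,B)\mapsto(2A,2B)$ into Bhargava's \emph{other} parametrization, the one classifying triples $(R,I,\delta)$ where $R$ is a cubic ring, $I$ an ideal, and $I^2\subset(\delta)$ (Theorem~\ref{CorrsTrip}). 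The key computational lemma (Lemma~\ref{DiagConm}) identifies $\mathcal{Q}_{(A,B)}$ with the form $\bigl(I,\tr(x^2/\delta)\bigr)$, so the problem becomes: show that this ideal-theoretic trace form determines the equivalence class of $(R,I,\delta)$. That injectivity statement (Theorem~\ref{InjectivityTheorem}) is then proved using Casimir pairings---one shows that the matrix of Casimir elements $c_{ij}=\langle\sigma_i,\tau_j\phi\rangle\sqrt{\sigma_i(\delta)/\tau_j(\epsilon)}$ is orthogonal with algebraic-integer entries bounded by $1$ in every archimedean place, forcing it to be a signed permutation matrix. None of this machinery appears in your outline; the phrase ``in the same spirit that Bhargava's cubes resolved the analogous step in the cubic case'' does not substitute for it, because in the cubic case the trace-zero form \emph{is} literally the Hessian of the binary cubic, whereas here there is no comparably simple identity to invert.
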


Another quadratic invariant, with a more geometric interpretation and closely related to the trace zero form, that has been studied by several authors is the shape of $K$.  Endow $K$ with the real-valued  $\mathbb{Q}$-bilinear form $b_K$ whose associated quadratic form is given by \[b_K(x,x):= \sum_{\sigma : K \hookrightarrow \mathbb{C}}  |\sigma(x)|^2.\] The \textit{shape} of $K$, denoted $\textrm{Sh}(K)$, is  the isometry equivalence class of $ (O_ K^{\bot},b_K)$ up to scalar multiplication, where $O_ K^{\bot}$ is the image of $O_K$ under the projection map, 
 $\alpha \mapsto \alpha_{\bot}:=n\alpha-\tr_{K/\mathbb{Q}}(\alpha)$, i.e.,
\[O_ K^{\bot}:= \{\alpha_\bot : \alpha \in O_K \}=(\mathbb{Z}+n O_K) \cap O_{K}^{0}.\]
Thus $\textrm{Sh}(K)=\textrm{Sh}(L)$ if and only if $(O_K^{\bot},b_K) \cong  (O_L^{\bot}, \lambda b_L) $ for some $\lambda \in \mathbb{R}^\times$. Equivalently,  ${\rm Sh}(K)$ can be thought as the  $(n-1)$-dimensional lattice inside $\R^{n}$, via the Minkowski embedding, that is the orthogonal complement of $1$ and that is defined up to reflection, rotations and scaling by $\R^{*}$. Hence {\rm Sh}(K) correspond to an element to the {\it space of shapes} \[\mathcal{S}_{n-1}:= {\rm GL}_{n-1}(\Z) \setminus {\rm GL}_{n-1}(\R)/{\rm GO}_{n-1}(\R).\] The distribution of shapes of number fields in $\mathcal{S}_{n}$ have been the subject of a lot of interesting current research (see \cite{bhargavaPh,RobH,RobH1, Man3}). Our main result about shapes and trace zero forms is the following:

\begin{teo}[cf. Theorem \ref{Main}]

Let $K$ be totally real number field of fundamental discriminant and degree $n\ge 3$. If  $ \left(\mathbb{Z}/n \mathbb{Z}\right)^\times$ is cyclic, then for any number field $L$ the following are equivalent:
\begin{enumerate}[(i)]
\item $K \cong L$.

\item $(O_K^{\bot},\tr_{K/\mathbb{Q}})  \cong (O_L^{\bot},\tr_{L/\mathbb{Q}})$.

\item ${\rm Sh}(K)={\rm Sh}(L)$ and $L$ is totally real with fundamental discriminant.
\end{enumerate} If $\left(n,\d(K)\right)=1$ then the three items are also equivalent to (iv) $(O_K^{0},\tr_{K/\mathbb{Q}})  \cong (O_L^{0},\tr_{L/\mathbb{Q}})$.

\end{teo}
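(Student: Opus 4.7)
The implications $(i) \Rightarrow (ii), (iii), (iv)$ are immediate from functoriality of the trace pairing and of the shape, so the content lies in proving the converses. My plan is to establish $(iii) \Rightarrow (ii) \Rightarrow (i)$ in that order, and to deduce $(iv) \Rightarrow (i)$ under the coprimality hypothesis $(n, \d(K)) = 1$ by reduction to $(ii)$.

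For $(iii) \Rightarrow (ii)$ I would first exploit total reality: when $K$ is totally real, $b_K(x,x) = \sum_{\sigma} \sigma(x)^2 = \tr_{K/\Q}(x^2)$, so on $O_K^{\bot}$ the form $b_K$ coincides with the restriction of $\tr_{K/\Q}$, and similarly for $L$. Equality of shapes then yields an isometry $(O_K^{\bot}, \tr_{K/\Q}) \cong (O_L^{\bot}, \lambda \tr_{L/\Q})$ for some positive rational $\lambda$, and comparing the discriminants of these rank $n-1$ lattices --- which admit closed form expressions in $n$ and $\d(K), \d(L)$ --- together with the hypothesis that both discriminants are fundamental should force $\lambda = 1$. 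For the parallel reduction $(iv) \Rightarrow (ii)$ I would use that when $(n, \d(K)) = 1$ the sublattice $O_K^{\bot}$ coincides locally with $n \cdot O_K^{0}$ at every prime, so any isometry between $(O_K^{0}, \tr_{K/\Q})$ and $(O_L^{0}, \tr_{L/\Q})$ descends to an isometry of the associated $O^{\bot}$-lattices.

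For the crucial step $(ii) \Rightarrow (i)$, my strategy is to recover the integral trace form on $O_K^{0}$ from its restriction to $O_K^{\bot}$, and then to invoke a recognition theorem for trace-zero forms of totally real fields of fundamental discriminant. Locally at each rational prime $p$, the inclusion $O_K^{\bot} \subseteq O_K^{0}$ is either an equality (when $p \nmid n$) or has a computable $p$-adic cokernel determined by the structure of $O_{K,p}$; when $K$ has fundamental discriminant the only delicate primes are those dividing $n$, and the hypothesis that $(\Z/n\Z)^{\times}$ is cyclic is meant to enter here as the condition which makes the local trace form at each such prime rigid enough to be reconstructed from $O_K^{\bot}$ alone. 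With an isometry $(O_K^{0}, \tr_{K/\Q}) \cong (O_L^{0}, \tr_{L/\Q})$ in hand, the prior work of the first named author on the integral trace-zero form of totally real fields of fundamental discriminant should then yield $K \cong L$.

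The principal obstacle I anticipate is precisely this final reconstruction: in the absence of a Bhargava-type parametrization in arbitrary degree, the cyclicity hypothesis on $(\Z/n\Z)^{\times}$ must carry all the weight in pinning down the local trace data at primes dividing $n$, and executing the local-to-global argument cleanly --- while correctly interfacing with the existing classification of trace forms of fundamental-discriminant totally real fields --- is the step most likely to require genuine new input beyond the degree $3$ and $4$ arguments handled via Bhargava's parametrizations in the earlier sections.
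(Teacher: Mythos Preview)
Your handling of $(iii)\Rightarrow(ii)$ is essentially what the paper does, but the crucial implication $(ii)\Rightarrow(i)$ is misdirected. You propose to reconstruct $(O_K^{0},\tr_{K/\Q})$ from $(O_K^{\bot},\tr_{K/\Q})$ and then invoke a ``recognition theorem for trace-zero forms of totally real fields of fundamental discriminant.'' No such theorem is available in degree $n>4$; the cubic case was done via Delone--Faddeev--Bhargava and the quartic case is proved \emph{in this very paper} using Bhargava's quartic parametrization. What does exist in general is a recognition theorem for the \emph{full} integral trace form: for totally real $K$ of fundamental discriminant, $(O_K,\tr_{K/\Q})\cong(O_L,\tr_{L/\Q})$ implies $K\cong L$ (this is \cite[Theorem~5.3]{Casimir}). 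Accordingly, the paper does not pass through $O_K^{0}$ at all for $(ii)\Rightarrow(i)$; instead it lifts any isometry $\varphi:O_K^{\bot}\to O_L^{\bot}$ directly to $(O_K,\tr_{K/\Q})\cong(O_L,\tr_{L/\Q})$ via the two canonical extensions $\varphi^{\pm}$ sending $1\mapsto\pm 1$. Writing $\varphi(n\alpha_i-t_i)=n\beta_i-s_i$ and comparing traces gives $t_it_j\equiv s_is_j\pmod n$, whence $s_i\equiv u\,t_i\pmod n$ for some unit $u$ with $u^2\equiv 1\pmod n$; cyclicity of $(\Z/n\Z)^{\times}$ forces $u\equiv\pm 1$, which is exactly the condition for $\varphi^{+}$ or $\varphi^{-}$ to land in $O_L$. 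This is where cyclicity actually enters, not in any local reconstruction at primes dividing $n$.

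Your route $(iv)\Rightarrow(ii)$ is also off: the claim that $O_K^{\bot}$ agrees locally with $n\cdot O_K^{0}$ is false at primes $p\mid n$ (one has $[O_K^{0}:O_K^{\bot}]=n^{\,n-2}$ while $[O_K^{0}:nO_K^{0}]=n^{\,n-1}$), and in any case $O_K^{\bot}$ is not a priori an intrinsic sublattice of the quadratic module $(O_K^{0},\tr_{K/\Q})$, so an arbitrary isometry of $O_K^{0}$'s need not carry $O_K^{\bot}$ to $O_L^{\bot}$. The paper instead lifts $\varphi:O_K^{0}\to O_L^{0}$ directly to $O_K$ by a separate (and more delicate) argument: using a special basis of $O_K^{0}$ one shows, under $(n,\d(K))=1$, that a certain coefficient $l_{n-1}$ satisfies $l_{n-1}^{2}\equiv 1\pmod n$, and cyclicity again finishes the job.
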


Based in the above theorem, the results proved here, Question \ref{Question8} and the computational evidence given in \cite[Table 1]{Man2} we propose the following conjecture:

\begin{conjecture}
Let $K$ be a totally real octic number field with fundamental discriminant. If $L$
is a tamely ramified number field such that an isomorphism of quadratic modules
\[(O_{K}^{0}, \tr_{K/\Q}) \cong (O_{L}^{0}, \tr_{L/\Q})\] exists, then $K \cong L$.
\end{conjecture}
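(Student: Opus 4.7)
The strategy is to mirror the structure of the proofs of the quartic case (Section~\ref{QuarticParametrization}) and of the cyclic $(\Z/n\Z)^{\times}$ case (Theorem~\ref{Main}), adapted to degree $8$. Since $(\Z/8\Z)^{\times} \cong \Z/2\Z \times \Z/2\Z$ is not cyclic, the argument of Theorem~\ref{Main} does not apply verbatim, and since Bhargava-style parametrizations by prehomogeneous vector spaces are available only for $n \le 5$, a direct parametric attack in the spirit of Section~\ref{QuarticParametrization} is also unavailable. The plan is therefore to combine local-global arguments with the finer arithmetic structure of octic fields, in particular their subfields and resolvents, and to invoke the conjectural input of Question~\ref{Question8} where needed.

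First I would extract, from the isometry class of $(O_{K}^{0}, \tr_{K/\Q})$, the discriminant, the signature (in particular forcing $L$ to be totally real), and the local ramification data of $L$. The tameness of $L$, together with the fundamental discriminant hypothesis on $K$, should force $L$ to share with $K$ its ramification profile at every rational prime, reproducing the local bookkeeping of Theorem~\ref{Main}. The prime $2$ demands extra care, since this is precisely where the cyclicity of $(\Z/n\Z)^{\times}$ fails; the tameness on $L$ and the fundamental discriminant on $K$ together should eliminate wild ramification at $2$ on both sides and reduce the $2$-adic comparison to a finite case check.

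Next I would split the argument according to the Galois group $G$ of the Galois closure of $K$ inside $S_{8}$. Whenever the point stabilizer in $G$ is not maximal, $K$ admits a proper intermediate subfield $F$; the inclusion $O_{F} \subset O_{K}$ induces an orthogonal decomposition of the trace pairing from which $F$ and enough of the relative structure of $K/F$ can be read off, and one can then close the loop by invoking the quartic theorem of Section~\ref{QuarticParametrization} when $[F:\Q]=4$, or the quadratic case together with a relative trace-form argument when $[F:\Q]=2$. The heart of the matter, and what I expect to be the main obstacle, is the primitive case $G \in \{S_{8}, A_{8}\}$, in which the point stabilizer is maximal and $K$ has no proper intermediate subfield. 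There the trace-zero form must pin down $K$ with no algebraic decomposition to lean on, and one must bridge the gap from agreement of $K$ and $L$ at every completion to a global isomorphism. In the absence of a degree-$8$ Bhargava parametrization, this last step seems to require precisely the sort of rigidity input formulated in Question~\ref{Question8}; it is, in my view, the reason the assertion above is phrased as a conjecture rather than a theorem.
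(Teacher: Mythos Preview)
There is nothing to compare to: the statement is labeled a \emph{conjecture} in the paper and is not proved there. It is offered immediately after Theorem~\ref{Main} as a natural next case, motivated by the computational evidence of \cite[Table~1]{Man2} and by Question~\ref{Question8}, precisely because $(\Z/8\Z)^{\times}$ is not cyclic and no Bhargava-type parametrization exists in degree $8$. You recognize this yourself in your last sentence.

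Your proposal is therefore not a proof but a strategic outline, and you are honest about its gaps. A few comments on the outline itself. The reduction to equal, square-free discriminants and total reality of $L$ is indeed available from Lemma~\ref{TameYFundaIsSquafree} and the signature of the form, so that part is on solid ground. The subfield/decomposition idea is more delicate than you suggest: for a general octic $K$ with a quartic subfield $F$, the inclusion $O_F \subset O_K$ does not in general produce an \emph{orthogonal} splitting of $(O_K,\tr_{K/\Q})$ over $\Z$, so ``reading off $F$'' from the trace-zero form of $K$ would itself need justification. More importantly, even granting that step, passing from local agreement at every place to a global isomorphism is exactly the missing ingredient the paper flags; your own conclusion that the primitive case ``seems to require precisely the sort of rigidity input formulated in Question~\ref{Question8}'' is an admission that the argument is incomplete. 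In short: your diagnosis of where the difficulty lies matches the paper's, but neither you nor the paper supplies a proof.
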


\subsection{Structure of the paper}

In \S \ref{ElGeneral} we prove our main results on trace zero modules and shapes of arbitrary degree. In \S \ref{CasimirInv} we briefly remind some of the basic definitions of Casimir invariants, and in \S \ref{QuarticParametrization} we give a proof of  \cite[Conjecture 2.10]{Man2} based on Bhargava's parametrization of quartic rings.

\section{From $O_{K}^{0}$ and $O_K^{\bot}$ to $O_{K}$}\label{ElGeneral}

In this section we study in detail the structure of the modules $O_{K}^{0}$ and $O_K^{\bot}$ and layout a strategy to see when an isometry between the trace zero parts (resp. shapes) can be lifted to a full isometry of the integral trace. By the end of this section we explain what are the limitations of such strategy.

\subsection{The discriminants of $O_{K}^{0}$ and $O_K^{\bot}$.}

We start by finding out what are the values of the determinants of the quadratic modules $O_{K}^{0}$ and $O_K^{\bot}$.\\

We denote by  ${\rm Disc}(O_K^{0})$ (resp. ${\rm Disc}(O_K^{\bot})$)  the determinant of the quadratic module $(O_K^{0},\tr_{K/\mathbb{Q}})$ (resp. $(O_K^{\bot},\tr_{K/\mathbb{Q}})$).

The following lemma will be of use to us.

\begin{lemma}\label{DetsRelativos}
Let $K$ be a degree $n>1$ number field and let $k$ be a positive integer such that $\tr_{K/\Q}(O_{K})=k\Z$.  Then, \[{\rm Disc}(O_K^{0}) = \frac{n}{k^2}{\rm Disc}(K) \ {\rm and} \  {\rm Disc}(O_K^{\bot})=n^{2n-3} {\rm Disc}(K). \]
\end{lemma}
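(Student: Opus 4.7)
The plan is to relate both lattices to $O_{K}$ through index computations, exploiting the orthogonal decomposition of $K$ induced by the trace form: $K^{0}$ and $\Q\cdot 1$ are orthogonal (since $\tr(x\cdot 1)=\tr(x)=0$ for $x\in K^{0}$), with the pairing on $\Q\cdot 1$ given by the scalar $\tr(1)=n$. This allows me to build sublattices of $O_{K}$ whose Gram matrices split into small blocks, and then use $\d(\Lambda') = [\Lambda:\Lambda']^{2}\,\d(\Lambda)$ to transfer determinants.

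For the first identity I would introduce the sublattice $M := O_{K}^{0} \oplus \Z\cdot 1 \subseteq O_{K}$. By the orthogonality above, the Gram matrix of $M$ is block-diagonal with the Gram matrix of $O_{K}^{0}$ and the scalar $n$, so $\d(M) = n\cdot\d(O_{K}^{0})$. The key point is then the index $[O_{K}:M]$. First, $k\mid n$ since $n = \tr(1)\in k\Z$. Next, choosing $\alpha\in O_{K}$ with $\tr(\alpha)=k$ and a $\Z$-basis $f_{1},\dots,f_{n-1}$ of $O_{K}^{0}$, I obtain a $\Z$-basis $\{f_{1},\dots,f_{n-1},\alpha\}$ of $O_{K}$ and a $\Z$-basis $\{f_{1},\dots,f_{n-1},1\}$ of $M$. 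Since $1 - (n/k)\alpha \in O_{K}^{0}$, the change-of-basis matrix is upper triangular with determinant $n/k$, so $[O_{K}:M] = n/k$. Substituting into the index formula then yields $\d(O_{K}^{0}) = (n/k^{2})\,\d(K)$.

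For the second identity I would fix a $\Z$-basis $e_{1}=1,e_{2},\dots,e_{n}$ of $O_{K}$ (which exists because $\Z\cdot 1$ is saturated in $O_{K}$). The map $\phi(\alpha) := \alpha_{\bot}$ has kernel $\Z$ (if $n\alpha = \tr(\alpha)\cdot 1$ then $\alpha\in \Q\cap O_{K} = \Z$), so $\phi(e_{2}),\dots,\phi(e_{n})$ form a $\Z$-basis of $O_{K}^{\bot}$. Writing $\phi(e_{i}) = n e_{i} - \tr(e_{i})\,e_{1}$ in coordinates gives an $n\times(n-1)$ matrix $P$, and the Gram matrix of $O_{K}^{\bot}$ equals $P^{T}TP$, where $T$ is the Gram matrix of $O_{K}$. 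I would then augment $P$ to the square matrix $Q = (e_{1}\mid P)$, whose determinant is $n^{n-1}$, and compute $Q^{T}TQ$: the off-diagonal blocks vanish because $e_{1}^{T}T\phi(e_{i}) = \tr(\phi(e_{i})) = 0$ (as $\phi(e_{i})\in O_{K}^{0}$), and the top-left entry is $\tr(1)=n$. Equating $\det(Q^{T}TQ) = n\cdot\d(O_{K}^{\bot})$ with $\det(Q)^{2}\det(T) = n^{2(n-1)}\,\d(K)$ then yields $\d(O_{K}^{\bot}) = n^{2n-3}\,\d(K)$. I do not anticipate a serious obstacle: once the right sublattice (respectively, basis) is chosen, both identities reduce to small block determinant computations, and the only delicate step is the index calculation $[O_{K}:M] = n/k$ in the first part, which hinges on the divisibility $k\mid n$ and the explicit expansion of $1$ in the chosen basis of $O_{K}$.
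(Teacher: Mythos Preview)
Your proof is correct and follows essentially the same strategy as the paper: form the orthogonal sum $\Z\cdot 1 \oplus O_K^{0}$ (resp.\ $\Z\cdot 1 \oplus O_K^{\bot}$), compute its index in $O_K$ as $n/k$ (resp.\ $n^{n-1}$), and use the orthogonality to split the Gram determinant. The paper obtains the indices via the quotient isomorphisms $O_K/(\Z+O_K^{0})\cong \Z/(n/k)\Z$ (from surjectivity of the trace onto $k\Z$) and the observation $\Z+O_K^{\bot}=\Z+nO_K$, whereas you recover the same numbers by writing down explicit change-of-basis matrices; these are two phrasings of the same computation.
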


\begin{proof}

Using the surjectivity of trace map in the instances $\tr_{K/\Q} :O_{K} \twoheadrightarrow k\Z$
 and $\tr_{K/\Q} :\Z +nO_{K} \twoheadrightarrow n\Z$
 we observe that \[\left(O_{K} / (\Z + O_K^{0}) \right) \cong \Z/ \left( n/k\right)\Z  \  {\rm and}  \ \Z +nO_{K} = \Z + O_K^{\bot}.\] Since $O_{K}$ has an integral basis containing $1$ there is an isomorphism of $\Z$-modules $O_{K}/(\Z +nO_{K}) \cong (\Z/n\Z)^{n-1}$.  In particular,  \[ [O_{K} : \Z + O_K^{0}] = n/k  \  {\rm and}  \ [O_{K} : \Z + O_K^{\bot}] =n^{n-1}.\] The result follows from this and from the fact that the decomposition $\Z + O_K^{0}$ is orthogonal with respect the trace pairing. 

\end{proof}

\begin{lemma} (Maurer, \cite{Mau}).\label{TraceMau} Let $k$ be an integer such that $\tr_{K/\mathbb{Q}}(O_K) = k\mathbb{Z}$, then a prime $p$ divides $k$ if and only if $p \mid e(\mathfrak{p}|p) $ for all primes $\mathfrak{p}$ in $K$ lying over $p$.
\end{lemma}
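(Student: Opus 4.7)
The plan is to reduce the global statement to a local one at $p$, and then use the relation between the trace ideal and the different.

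First I would localize: since $O_K \otimes_{\mathbb{Z}} \mathbb{Z}_p \cong \prod_{\mathfrak{p} \mid p} O_{K_\mathfrak{p}}$, where $O_{K_\mathfrak{p}}$ denotes the valuation ring of the completion at $\mathfrak{p}$, the global trace decomposes as $\tr_{K/\mathbb{Q}}(\alpha) = \sum_{\mathfrak{p} \mid p} \tr_{K_\mathfrak{p}/\mathbb{Q}_p}(\alpha_\mathfrak{p})$. Because the summands can be varied independently (pick an element supported at a single prime), the condition $p \mid k$, i.e.\ $\tr_{K/\mathbb{Q}}(O_K) \subseteq p\mathbb{Z}$, is equivalent to
\[
\tr_{K_\mathfrak{p}/\mathbb{Q}_p}\bigl(O_{K_\mathfrak{p}}\bigr) \subseteq p\mathbb{Z}_p \qquad \text{for every } \mathfrak{p} \mid p.
\]

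Next I would translate this local condition into one about the different. Recall that the inverse different $\mathfrak{D}_{K_\mathfrak{p}/\mathbb{Q}_p}^{-1}$ is by definition the fractional ideal $\{x \in K_\mathfrak{p} : \tr_{K_\mathfrak{p}/\mathbb{Q}_p}(x \cdot O_{K_\mathfrak{p}}) \subseteq \mathbb{Z}_p\}$. Thus $\tr(O_{K_\mathfrak{p}}) \subseteq p\mathbb{Z}_p$ is equivalent to $p^{-1} O_{K_\mathfrak{p}} \subseteq \mathfrak{D}_{K_\mathfrak{p}/\mathbb{Q}_p}^{-1}$, i.e.\ to $\mathfrak{D}_{K_\mathfrak{p}/\mathbb{Q}_p} \subseteq p\, O_{K_\mathfrak{p}}$, or equivalently $v_\mathfrak{p}(\mathfrak{D}_{K_\mathfrak{p}/\mathbb{Q}_p}) \ge e(\mathfrak{p}|p)$.

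Finally I would invoke the classical computation of the valuation of the different in terms of ramification: for any finite extension of $\mathbb{Q}_p$ with ramification index $e$, one has $v(\mathfrak{D}) \ge e - 1$, with equality precisely in the tamely ramified case (i.e.\ when $p \nmid e$); in the wild case $v(\mathfrak{D}) \ge e$. Combining these two inequalities gives $v_\mathfrak{p}(\mathfrak{D}_{K_\mathfrak{p}/\mathbb{Q}_p}) \ge e(\mathfrak{p}|p)$ if and only if $p \mid e(\mathfrak{p}|p)$. Running this back through the local-global reduction yields the stated equivalence.

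The only real content is the characterization of tame versus wild ramification via the different in step three; everything else is formal. I would cite this standard fact (e.g.\ from Serre's \emph{Local Fields}) rather than reprove it. A minor subtlety to handle cleanly is the passage from the global trace being in $p\mathbb{Z}$ to the analogous statement for each completion: one must observe that the idempotents of $O_K \otimes \mathbb{Z}_p$ allow one to isolate a single local factor, so the sum being in $p\mathbb{Z}_p$ for every input forces each summand to be.
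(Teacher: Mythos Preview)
Your proof is correct and follows essentially the same route as the paper: both translate the trace condition into the inequality $v_{\mathfrak{p}}(\mathcal{D}) \ge e(\mathfrak{p}\mid p)$ for every $\mathfrak{p}\mid p$ via the defining property of the inverse different, and then invoke the standard tame/wild dichotomy for $v_{\mathfrak{p}}(\mathcal{D})$. The only cosmetic difference is that you pass to completions first while the paper works directly with the global different $\mathcal{D}_{K/\mathbb{Q}}$; as a byproduct the paper's argument yields the sharper formula $v_p(k)=\min_{\mathfrak{p}\mid p}\bigl\lfloor v_{\mathfrak{p}}(\mathcal{D}_{K/\mathbb{Q}})/e(\mathfrak{p}\mid p)\bigr\rfloor$.
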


\begin{proof}
We will give an alternative proof to that presented in \cite{Mau}. In fact, it is not hard to see that the following slightly more general statement holds: Let $\mathcal{D}_{K/\mathbb{Q}}$ denote the different ideal of $K/\mathbb{Q}$, then for a rational prime $p$ we have
\[v_p(k)=\min_{\mathfrak{p} \mid \,  p} \left \lfloor{\frac{v_{\mathfrak{p}}\left(\mathcal{D}_{K/\mathbb{Q}} \right)}{e(\mathfrak{p}| p)}}\right \rfloor,\]

\noindent and the lemma will follow from the fact $v_{\mathfrak{p}}\left(\mathcal{D}_{K/\mathbb{Q}} \right) \geq e(\mathfrak{p}| p) \iff p \mid e(\mathfrak{p}| p)$. Since the trace map is a morphism of $\Z$-modules, and by hypothesis its image is  $k\mathbb{Z}$, the result is obtained from the following elementary equivalences. For any $r\in \mathbb{Z}^+ \cup \{0\} $,
\begin{equation*}
\begin{split}
 p^r \mid k & \iff \tr_{K/\mathbb{Q}}(O_K) \subset p^r \mathbb{Z}\\
 & \iff \tr_{K/\mathbb{Q}}(p^{-r}\,O_K) \subset \mathbb{Z}\\
 & \iff p^{-r} \in \mathcal{D}_{K/\mathbb{Q}}^{-1} \\ 
  & \iff  \mathcal{D}_{K/\mathbb{Q}} \subset p^{r}O_{K}\\
   & \iff  r  v_{\mathfrak{p}}(p) \leq v_{\mathfrak{p}}\left( \mathcal{D}_{K/\mathbb{Q}} \right),  \textnormal{ for all } \mathfrak{p} \textnormal{ prime in } K\\
    & \iff   r  e(\mathfrak{p}|p) \leq v_{\mathfrak{p}}\left( \mathcal{D}_{K/\mathbb{Q}}\right),  \textnormal{ for all } \mathfrak{p} \textnormal{ dividing } p \\
    & \iff   r \leq  \left \lfloor{\frac{v_{\mathfrak{p}}\left(\mathcal{D}_{K/\mathbb{Q}} \right)}{e(\mathfrak{p}| p)}}\right \rfloor,   \textnormal{ for all } \mathfrak{p} \textnormal{ dividing } p \\  
    & \iff r \leq  \min_{\mathfrak{p} \mid \,  p} \left \lfloor{\frac{v_{\mathfrak{p}}\left(\mathcal{D}_{K/\mathbb{Q}} \right)}{e(\mathfrak{p}| p)}}\right \rfloor.
\end{split}
\end{equation*}
The result follows, since $v_p(k)$ is the largest of such $r$.
\end{proof}
\vspace*{0.5cm}
\begin{corollary}\label{TraceZ}
Let $L$ be a number field of degree $n$. Suppose any of the following conditions holds:\begin{enumerate}[(i)]
\item Every prime $p \mid n$ satisfies $p^n \nmid \d(L). $
\item The extension $L/\mathbb{Q}$ is tamely ramified at $p$, for every $p\mid n$.
\item The field $L$ has fundamental discriminant and $n >2$.
\end{enumerate}
Then, $\tr_{L/\mathbb{Q}}(O_L) = \mathbb{Z}$. Moreover, if $n=4$  condition $(i)$ is equivalent to $\tr_{L/\mathbb{Q}}(O_L) = \mathbb{Z}$ and if $n$ is a  prime conditions $(i)$ and $(ii)$ are equivalent to $\tr_{L/\mathbb{Q}}(O_L) = \mathbb{Z}$.
\end{corollary}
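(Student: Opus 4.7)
The plan is to combine Maurer's lemma (Lemma~\ref{TraceMau}) with the classical wild-ramification bound $v_\mathfrak{p}(\mathcal{D}_{L/\Q}) \geq e(\mathfrak{p}|p)$, valid whenever $p \mid e(\mathfrak{p}|p)$, which already appears inside the proof of Lemma~\ref{TraceMau}. The key observation is this: if $p$ is a rational prime with $p \mid e(\mathfrak{p}|p)$ for \emph{every} prime $\mathfrak{p}\mid p$ of $L$, then the identity $\sum_{\mathfrak{p}\mid p}e(\mathfrak{p}|p)f(\mathfrak{p}|p)=n$ forces $p\mid n$, and the wild bound together with $v_p(\d(L))=\sum_{\mathfrak{p}\mid p}f(\mathfrak{p}|p)\,v_\mathfrak{p}(\mathcal{D}_{L/\Q})$ forces $v_p(\d(L))\geq n$. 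Letting $k$ denote the positive generator of $\tr_{L/\Q}(O_L)$, Maurer's lemma thus reads
\[p \mid k \;\Longrightarrow\; p \mid n \text{ and } p^n \mid \d(L),\]
and this single implication will drive every forward direction in the statement.

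From it, (i) immediately yields $k=1$: any prime divisor of $k$ would both divide $n$ and satisfy $p^n\mid \d(L)$, contradicting (i). For (ii), tame ramification at each $p\mid n$ gives $p\nmid e(\mathfrak{p}|p)$ for every $\mathfrak{p}\mid p$, hence $p\nmid k$ directly by Maurer. For (iii) I would invoke the standard fact that a fundamental discriminant $D$ satisfies $v_q(D)\le 1$ for every odd prime $q$ and $v_2(D)\le 3$; since $n\ge 3$, an odd $p\mid n$ has $p^n\ge p^3$ while $v_p(D)\le 1$, and if $2\mid n$ then $n\ge 4 > 3 \ge v_2(D)$; either way (i) holds, so $k=1$.

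For the converses I would case-split on the splitting type of the relevant prime in $O_L$. When $n=p$ is prime, $\sum e_\mathfrak{p} f_\mathfrak{p}=p$ forces any $\mathfrak{p}$ with $p\mid e(\mathfrak{p}|p)$ to have $e(\mathfrak{p}|p)=p$, $f(\mathfrak{p}|p)=1$, and to be the unique prime above $p$; consequently ``some $\mathfrak{p}$ has $p\nmid e(\mathfrak{p}|p)$'' is equivalent to ``every $\mathfrak{p}$ does'', which identifies $\tr_{L/\Q}(O_L)=\Z$ with (ii), while the same dichotomy combined with the bound $v_p(\d(L))\ge p$ in the wild case yields (i)$\Leftrightarrow$(ii). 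When $n=4$, the splitting types of $2$ in which every $e_\mathfrak{p}$ is even are exactly $(4,1)$, $(2,2)$, and $(2,1)(2,1)$; in every other type the unique ramified prime (if any) has $e_\mathfrak{p}\in\{2,3\}$, and the sharp local bound $v_\mathfrak{p}(\mathcal{D}_{L/\Q})\le 3$ for a ramified quadratic extension of $\Q_2$ (classical: $\Q_2$ admits exactly three ramified quadratic extensions, with discriminants in $\{-4,8,-8\}$) gives $v_2(\d(L))\le 3 < 4$. This last local bound is where the main technical work sits; the rest amounts to bookkeeping with Maurer's lemma and the fundamental identity $\sum e_\mathfrak{p} f_\mathfrak{p}=n$.
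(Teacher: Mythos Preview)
Your proof is correct and follows the same overall strategy as the paper: Maurer's lemma combined with discriminant/different bounds. The packaging differs in a few places worth noting. For the forward direction under (i), the paper uses the more elementary observation that every entry of a Gram matrix of $(O_L,\tr_{L/\Q})$ lies in $k\Z$, so $k^n\mid\d(L)$ outright, without passing through the different; your route via $v_\mathfrak{p}(\mathcal{D}_{L/\Q})\ge e$ and $\sum_{\mathfrak{p}\mid p} e_\mathfrak{p} f_\mathfrak{p}=n$ reaches the same conclusion. For (iii) you reduce to (i), whereas the paper argues directly (tameness at odd primes, then a short parity contradiction at $2$); your reduction is arguably cleaner. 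For the $n=4$ converse the paper invokes the general bound $v_\mathfrak{p}(\mathcal{D}_{L/\Q})\le e\,(v_p(e)+1)-1$ in place of your explicit list of ramified quadratic extensions of $\Q_2$; both give $v_\mathfrak{p}(\mathcal{D}_{L/\Q})\le 3$ in the relevant case. One small point to make explicit in your write-up: when $e_\mathfrak{p}=3$ in the $n=4$ analysis, your quadratic-extension bound does not literally apply, but this case is tame at $p=2$, so $v_\mathfrak{p}(\mathcal{D}_{L/\Q})=2$ and $v_2(\d(L))=2<4$ anyway.
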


\begin{proof}
Let $k \in \mathbb{Z}^+$ be such that $\tr_{L/\mathbb{Q}}(O_L)=k\mathbb{Z}$, then:

\begin{itemize}
\item If $(i)$ holds, by definition of discriminant as determinant of a Gram matrix of $(O_L,\tr_{L/\mathbb{Q}})$, we know that $k^n \mid \d(L)$, as $k \mid n$, any $p$ prime dividing $k$ would satisfy $p^n \mid \d(L)$  and $p \mid n$, thus no such prime exists and $k=1$.
\item If $(ii)$ holds and $p$ is prime dividing $k$ then $p$ divides $n$; this since $n=\tr_{L/\mathbb{Q}}(1)$  and therefore $k \mid n$. Then $p \nmid e(\mathfrak{p}| p)$ for every prime $\mathfrak{p}$ lying over $p$, contradicting Lemma \ref{TraceMau}. Therefore $k=1$.
\item If $(iii)$ holds, the extension $L/\mathbb{Q}$ is tame at every odd prime,  hence $p \nmid k$ for all $p \neq 2$. Suppose $2 \mid k$ and let $f_1^{e_1} \, f_2^{e_2} \cdots $   be the factorization type of $2$ in $L$, then $2 \mid e_i$ for all $i$, and  
\[ 3 \geq v_2(\d(L)) \geq e_1 f_1 +e_2 f_2 +\cdots=n > 2  \]
yields  $n=3=e_1f_1+e_2f_2+\cdots \equiv 0 \mod 2$, a contradiction. Thus $k=1$.
\end{itemize}

To prove the reciprocals, suppose  $k=1$ and $n=p$ is prime. Let $f_1^{e_1}  \cdots f_g^{e_g}$   be factorization type of $p$ in $L$. If condition $(ii)$ did not hold, there would be an index $i$ such that $p \mid e_i$ which we may assume is $i=1$. In that case $p=e_1f_1+\cdots \geq p$ implies that there is only one prime $\mathfrak{p}$ in $L$ lying over $p$ and for that prime $p \mid e(\mathfrak{p} | p)=e_1$, contradicting $(\ref{TraceMau})$. Since condition $(ii)$ always implies condition $(i)$, this proves \[\tr_{L/\mathbb{Q}}(O_L) = \mathbb{Z}\iff (ii) \iff (i),\] in this case.  \\

Similarly, if $k=1$, $n=4$ and condition $(i)$ did not hold, then $2^4 \mid \d(L)$, in particular $2$ would be wildly ramified in $L$, so its factorization type would be $1^2 2$, $1^2 1 1$, $1^2 1^2$ or $2^2$. Since $k=1$, only the first two cases are possible. However, for a prime $\mathfrak{p}$ in $K$ the exact power of $\mathfrak{p}$ dividing the different ideal $\mathcal{D}_{L/\mathbb{Q}}$ is bounded by  $e\cdot(v_p(e)+1)-1$, where $e=e(\mathfrak{p}|p)$ and $p=\mathfrak{p} \cap \mathbb{Z}$, thus in either case if $\mathfrak{p}$ is the prime in $L$ with $e(\mathfrak{p}|2)=2 $  we would have $\mathcal{D}_{L/\mathbb{Q}}=\mathfrak{p}^v\mathfrak{a}$, where $(\mathfrak{a},2)=1$ and $v \leq 3$. This yields $ 4 \leq v_2(\d(L))=v_2\left(\mathcal{N}(\mathcal{D}_{L/\mathbb{Q}}) \right)=v\cdot f(\mathfrak{p}\mid 2)=v\leq3  $, a contradiction.
\end{proof}

We say that a number field $L$ is tame, or tamely ramified, if no rational prime ramifies wildly in $L$.

\begin{lemma}\label{TameYFundaIsSquafree}

Let $n >1$ be an integer and let $K$ and $L$ be two degree $n$ number fields. Suppose that $K$ has fundamental discriminant and that $L$ is tame. If  \[(O_{K}^{0}, \tr_{K/\Q}) \cong (O_{L}^{0}, \tr_{L/\Q})\] then, ${\rm Disc}(K)={\rm Disc}(L)$ and moreover such discriminant is square free. In particular, a tame number field with fundamental discriminant has square free discriminant.
\end{lemma}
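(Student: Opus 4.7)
The strategy is to turn the given isometry into an equality of field discriminants via Lemma \ref{DetsRelativos}, and then combine the complementary ramification hypotheses on $K$ and $L$ to squeeze out squarefreeness of the common value.

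First, isometric $\Z$-quadratic modules share determinants, so the hypothesis yields $\d(O_K^0)=\d(O_L^0)$. I would then apply Corollary \ref{TraceZ} on both sides to get $k_K=k_L=1$ in the notation of Lemma \ref{DetsRelativos}: for $K$, part $(iii)$ of the corollary applies because $K$ has fundamental discriminant and $n>2$ (the case $n=2$ is a short separate computation with the rank-one trace-zero form, where both fields are forced to be the same real quadratic field); for $L$, part $(ii)$ applies by the tameness hypothesis. Substituting $k_K=k_L=1$ into Lemma \ref{DetsRelativos} collapses to $n\,\d(K)=n\,\d(L)$, hence $\d(K)=\d(L)$.

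For squarefreeness I would argue prime by prime. The fundamental discriminant hypothesis on $K$ constrains the local ramification at odd primes---as used implicitly inside the proof of Corollary \ref{TraceZ}$(iii)$, it forces $K$ to be tame at every odd prime with first-power contribution $v_p(\d(K))\leq 1$---while the global tameness of $L$ forces $v_2(\d(L))=0$. Combining these through the equality $\d(K)=\d(L)$ just established then gives $v_p\leq 1$ at every prime, i.e.\ squarefreeness. The ``In particular'' clause is then the special case $L=K$ of the main statement.

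I expect the squarefreeness step to be the main obstacle, since neither hypothesis is squarefreeness on its own: a tame field can still carry a repeated prime factor in its discriminant (e.g.\ from a splitting type $\mathfrak{p}^2\mathfrak{q}^2$ or $\mathfrak{p}^2$ with $f=2$), and ``fundamental discriminant'' leaves some freedom at primes dividing $n$. The technical work is therefore to extract from ``fundamental'' the sharp local condition $v_p\leq 1$ at odd primes and to match it against tameness of $L$ at the prime $2$, where the possibility of wild ramification is what must actually be excluded.
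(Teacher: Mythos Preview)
Your reduction to $\d(K)=\d(L)$ via Corollary~\ref{TraceZ} and Lemma~\ref{DetsRelativos} is exactly what the paper does, and your handling of the $n=2$ case is fine (modulo the harmless slip that the fields need not be real).

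The gap is in the squarefreeness step. Your claim that ``global tameness of $L$ forces $v_2(\d(L))=0$'' is false. Tameness at $2$ only says that every ramification index $e(\mathfrak{p}\mid 2)$ is odd; the local contribution to the discriminant is then $\sum_{\mathfrak{p}\mid 2}(e_{\mathfrak p}-1)f_{\mathfrak p}$, which is merely \emph{even}, not zero. For instance the cubic field $\Q[x]/(x^3-2x-2)$ is Eisenstein at $2$, hence totally and tamely ramified there, and has $v_2(\d)=2$. So combining ``fundamental'' (which gives $v_2\in\{0,2,3\}$) with ``tame at $2$'' (which gives $v_2$ even) still leaves the possibility $v_2(\d(L))=2$, and your prime-by-prime argument does not close it.

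The paper supplies the missing global ingredient: since $\Q(\sqrt{\d(L)})$ sits inside the Galois closure $\widetilde{L}$, and tame extensions are closed under compositum and subextension, $\Q(\sqrt{\d(L)})$ must itself be tame. But if $v_2(\d(L))\in\{2,3\}$ then $\Q(\sqrt{\d(L)})$ is wildly ramified at $2$, a contradiction. This Galois-closure step (or something equivalent to it) is what your proposal is missing; purely local bookkeeping at $2$ will not suffice.
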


\begin{proof}
Let us assume first that $n >2$. In such case it follows from Corollary \ref{TraceZ} that $\tr_{K/\mathbb{Q}}(O_K) = \mathbb{Z}=\tr_{L/\mathbb{Q}}(O_L)$. Thus ${\rm Disc}(K)={\rm Disc}(L)$ thanks to Lemma \ref{DetsRelativos}. Let $\widetilde{L}$ be the Galois closure of $L/\Q$. Recall that tame extensions are closed under composition and sub-extensions, see \cite[Chapter II, Corollaries 7.8, 7.9]{Ne}. Hence since $L$ is tame so it is $\widetilde{L}$, and so it is any $E/\Q$ sub-extension of $\widetilde{L}/\Q$. If ${\rm Disc}(L)$ were not square free, and since it is a fundamental discriminant, the extension $E=\Q(\sqrt{{\rm Disc}(L)})/\Q$ would be a sub-extension $\widetilde{L}/\Q$ that is not tame; since it has wild ramification at $p=2$. Now suppose $n=2$. If ${\rm Disc}(K) \neq {\rm Disc}(L)$ then, by Lemma \ref{DetsRelativos}, ${\rm Disc}(K)=4{\rm Disc}(L)$ which implies, since $n=2$, that $K \cong L$ thus ${\rm Disc}(K)={\rm Disc}(L)$ contradicting the initial hypothesis.
\end{proof}

\subsection{A basis for $O_{K}^{0}$ and a condition of an isometry extension.}

For a totally real number field $K$ \cite[Lemma 5.1]{Casimir} shows that $(O_K,\tr_{K/\mathbb{Q}}) \cong (O_L,\tr_{L/\mathbb{Q}})  $ always implies $(O_K^0,\tr_{K/\mathbb{Q}}) \cong (O_L^0,\tr_{L/\mathbb{Q}})  $. The following example (found by `brute force" using \texttt{Magma} ) shows that the converse is not true.

\begin{example}
Let $K$ and $L$ be the quartic fields with defining polynomials $x^4+82x^2+656$ and $x^4-2x^3-19x^2+20x+18$ respectively, then $K$ and $L$ are totally real fields such that $\d(K)=2^6 41^3=\d(L)$ and $(O_K^0,\tr_{K/\mathbb{Q}}) \cong (O_L^0,\tr_{L/\mathbb{Q}})  $ however $(O_K,\tr_{K/\mathbb{Q}}) \ncong (O_L,\tr_{L/\mathbb{Q}})  $  and $(O_K^{\bot},b_K) \ncong (O_L^{\bot},b_L)$.
\end{example}

In view of such example it is natural to ask under what conditions we could expect to have a reciprocal for \cite[Lemma 5.1]{Casimir}. To address that question, remark that since $K=\mathbb{Q}\,\bot \, K^0$, each isometry  $\varphi:(K^0,\tr_{K/\mathbb{Q}})\xrightarrow{\sim} (L^0,\tr_{L/\mathbb{Q}})$ has two natural extensions to an isometry $(K,\tr_{K/\mathbb{Q}})\cong (L,\tr_{L/\mathbb{Q}})$, the one taking $1$ to $1$ call it $\varphi^+$ and the one taking $1$ to $-1$ call it $\varphi^-$. These are in fact the only two possible extensions of $\varphi$. Indeed, given an isometry $\phi:(K,\tr_{K/\mathbb{Q}}) \xrightarrow{\sim}  (L,\tr_{L/\mathbb{Q}})$ extending $\varphi$, we know that $\phi(1)$ is orthogonal to $\phi(K^0)=L^0$, which is the orthogonal complement of $1\cdot\mathbb{Q}$ and, as $(L,\tr_{L/\mathbb{Q}})$ is non degenerate, this implies $\phi(1)\in \mathbb{Q}$ but then $[L:\mathbb{Q}]=\tr_{L/\mathbb{Q}}(\phi(1)^2)=[K:\mathbb{Q}]\phi(1)^2$, which proves $\phi(1) \in \{-1,+1\}$.\\

Since $K^0=O_K^{0}\cdot\mathbb{Q}=O_K^{\bot} \cdot \mathbb{Q}$, it follows that an isometry  \[\varphi:(O_K^0,\tr_{K/\mathbb{Q}})\xrightarrow{\sim} (O_L^0,\tr_{L/\mathbb{Q}}) \ {\rm resp.} \ \varphi:(O_K^{\bot},\tr_{K/\mathbb{Q}})\xrightarrow{\sim} (O_L^{\bot},\tr_{L/\mathbb{Q}})\]  will lift to an isometry  $(O_K,\tr_{K/\mathbb{Q}})\cong (O_L,\tr_{L/\mathbb{Q}})$ if and only if either $\varphi^{+}(O_K)=O_L$ or  $\varphi^{-}(O_K)=O_L$. But when do we have these equalities? This motivates the following:
\begin{lemma}\label{Lifting}
Let $L$ and $K$ be number fields and let $n:=[K:\mathbb{Q}]$, then:

\begin{enumerate}[(i)]

\item If $\varphi: (O_K^{\bot},\tr_{K/\mathbb{Q}})\xrightarrow{\sim} (O_L^{\bot},\tr_{L/\mathbb{Q}})$ is an isometry, then $\varphi^{\pm}(O_K)=O_L$ if and only if there exists a basis $\{1, \alpha_1 ,\ldots,\alpha_{n-1}\}$ of $O_K$ such that $t_i \equiv \pm s_i \mod n$ for all $1\leq i<n$, where $t_i:=\tr_{K/\mathbb{Q}}(\alpha_i)$ and the $s_i$'s are any integers such that $\varphi(\alpha_{i\bot})=n\beta_i-s_i \in O_L^{\bot}$,  with $\beta_i \in O_L$.
\item Suppose  that $\tr_{K/\mathbb{Q}}(O_K)=k\,\mathbb{Z}=\tr_{L/\mathbb{Q}}(O_L)$, $k \in\mathbb{Z}^+ $ and let  $\varphi: (O_K^0,\tr_{K/\mathbb{Q}})\xrightarrow{\sim} (O_L^0,\tr_{L/\mathbb{Q}})$ be an isometry, then $\varphi^{\pm}(O_K)=O_L$ if and only if $\varphi(\gamma_0) \equiv \pm 1 \mod n/k$. Where $\gamma_0:=1-(n/k) \gamma_K \in O_K^0$ and $\gamma_K$ is any element in $O_K$ such that $\tr_{K/\mathbb{Q}}(\gamma_K)=k$ .

\end{enumerate}

\end{lemma}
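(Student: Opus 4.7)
In both parts, the extension $\varphi^{\pm}$ is already an isometry $(K,\tr_{K/\Q})\cong(L,\tr_{L/\Q})$ of rational quadratic spaces, so the only remaining question is whether $\varphi^{\pm}$ carries the lattice $O_K$ into $O_L$. Since $\varphi$ is an isometry between rank $n-1$ sublattices of matching determinants, Lemma~\ref{DetsRelativos} forces ${\rm Disc}(O_K)={\rm Disc}(O_L)$, so any such inclusion is automatically an equality. The plan is therefore to reduce each part to a pointwise integrality check of $\varphi^{\pm}$ on a convenient set of $\Z$-generators of $O_K$ lying over a sublattice on which $\varphi^{\pm}$ is visibly integral.

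\textbf{Part (i).} Given an integral basis $\{1,\alpha_1,\ldots,\alpha_{n-1}\}$ of $O_K$, the elements $\alpha_{i\bot}=n\alpha_i-t_i$ form a $\Z$-basis of $O_K^{\bot}$. Writing $\alpha_i=(\alpha_{i\bot}+t_i)/n$ and using $\Q$-linearity of $\varphi^{\pm}$ together with $\varphi^{\pm}(1)=\pm 1$, one computes
\[
\varphi^{\pm}(\alpha_i)=\beta_i+\frac{\pm t_i-s_i}{n}.
\]
Since $\beta_i\in O_L$ and $O_L\cap\Q=\Z$, this is integral in $L$ iff $n\mid(\pm t_i-s_i)$, i.e.\ iff $t_i\equiv\pm s_i\pmod n$. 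The existence of such a basis is therefore equivalent to $\varphi^{\pm}$ mapping a $\Z$-basis of $O_K$ into $O_L$, and by the discriminant remark above this is the same as $\varphi^{\pm}(O_K)=O_L$; the converse direction is just the same computation applied to an arbitrary basis.

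\textbf{Part (ii).} The trace condition yields $O_K=\Z\gamma_K+O_K^0$: any $\alpha\in O_K$ has $\tr_{K/\Q}(\alpha)=mk$ for some $m\in\Z$, and $\alpha-m\gamma_K\in O_K^0$. Similarly $O_L=\Z\gamma_L+O_L^0$ for any $\gamma_L\in O_L$ with $\tr_{L/\Q}(\gamma_L)=k$; set $\gamma_0':=1-(n/k)\gamma_L\in O_L^0$. Since $\varphi^{\pm}$ sends $\Z+O_K^0$ into $\Z+O_L^0\subseteq O_L$ for free, the integrality of $\varphi^{\pm}$ on all of $O_K$ reduces to whether $\varphi^{\pm}(\gamma_K)\in O_L$. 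Using $\gamma_K=(k/n)(1-\gamma_0)$ one obtains $\varphi^{\pm}(\gamma_K)=(k/n)(\pm 1-\varphi(\gamma_0))$, whose trace $\pm k$ forces its $\Z\gamma_L$-coefficient (in $O_L=\Z\gamma_L+O_L^0$) to be $\pm 1$; a direct computation then gives
\[
\varphi^{\pm}(\gamma_K)\mp\gamma_L=\pm\tfrac{k}{n}\bigl(\gamma_0'\mp\varphi(\gamma_0)\bigr).
\]
This lies in $O_L^0$ iff $\gamma_0'\mp\varphi(\gamma_0)\in(n/k)O_L^0$; combined with $\gamma_0'\equiv 1\pmod{(n/k)O_L}$ and the identity $(n/k)O_L\cap L^0=(n/k)O_L^0$, this yields exactly the claimed criterion $\varphi(\gamma_0)\equiv\pm 1\pmod{(n/k)O_L}$.

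\textbf{Main obstacle.} The main subtlety is interpretational: $\varphi(\gamma_0)\equiv\pm 1\bmod n/k$ must be read as congruence modulo the ideal $(n/k)O_L$. The identity $(n/k)O_L\cap L^0=(n/k)O_L^0$ is what lets this be phrased cleanly on the trace-zero submodule, and a short check shows that replacing $\gamma_K$ by another element of $O_K$ of trace $k$ alters $\gamma_0$ only within $(n/k)O_K^0$, so the stated condition does not depend on this choice.
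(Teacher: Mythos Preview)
Your proof is correct and follows essentially the same approach as the paper: reduce $\varphi^{\pm}(O_K)=O_L$ to an inclusion via the discriminant equality from Lemma~\ref{DetsRelativos}, then check integrality of $\varphi^{\pm}$ on generators. Part~(i) is virtually identical to the paper's argument. In Part~(ii) the paper is slightly more direct: after reaching $\varphi^{\pm}(\gamma_K)=(k/n)(\pm 1-\varphi(\gamma_0))$ it simply observes that $\pm 1-\varphi(\gamma_0)\in O_L$, so $(k/n)(\pm 1-\varphi(\gamma_0))\in O_L$ is immediately equivalent to $\varphi(\gamma_0)\equiv\pm 1\pmod{(n/k)O_L}$, without introducing $\gamma_L$, $\gamma_0'$, or the identity $(n/k)O_L\cap L^0=(n/k)O_L^0$. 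Your detour through the decomposition $O_L=\Z\gamma_L\oplus O_L^0$ is valid but unnecessary.
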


\begin{proof}  Note that in both cases the hypotheses and the existence of the respective isometry imply $[K:\mathbb{Q}]=n=[L:\mathbb{Q}]$ and $\d(K)=\d(L)$. For the degrees  this is clear, and for the discriminants it follows from the equalities, see Lemma \ref{DetsRelativos},  $\d(O_K^{\bot})=n^{2n-3}\d(K)$ and $\d(O_K^{0})=\frac{n}{k^2}\d(K)$. Thus in each case $\varphi^{\pm}(O_K) \subset O_L$ if and only if $\varphi^{\pm}(O_K) = O_L$. Now to prove  $(i)$ observe that for any basis $\{1, \alpha_1 ,\ldots,\alpha_{n-1}\}$ of $O_K$ we have

\begin{align*}
\varphi^{\pm}(O_K) \subset  O_L & \iff \varphi^{\pm}(\alpha_i) \in  O_L & \forall \, 1\leq i<n \\
& \iff \varphi^{\pm}\left(\frac{ \alpha_{i\bot}+ t_i}{n} \right)  \in  O_L &\forall \, 1\leq i<n \\
& \iff \frac{\varphi (\alpha_{i\bot}) \pm t_i}{n}  \in  O_L &\forall \, 1\leq i<n \\
& \iff \frac{n \beta_i-s_i \pm t_i}{n}  \in  O_L &\forall \, 1\leq i<n \\
& \iff \frac{-s_i \pm t_i}{n}  \in  O_L &\forall \, 1\leq i<n \\
&\iff s_i \equiv \pm  t_i \mod n &\forall \, 1\leq i<n 
\end{align*}

As for $(ii)$ observe that $O_K=\gamma_K \mathbb{Z}+O_K^0$, thus
\begin{align*}
\varphi^{\pm}(O_K) \subset  O_L & \iff \varphi^{\pm}(\gamma_K) \in  O_L \\
& \iff (k/n)\varphi^{\pm}(1-\gamma_0)  \in O_L\\
& \iff (k/n)( \pm 1-\varphi(\gamma_0))   \in  O_L\\
& \iff  \varphi(\gamma_0)=\pm 1\mod n/k
\end{align*} \end{proof}
To use this lemma, we begin by giving a description of the basis of $O_K^0$ that generalizes \cite[Proposition 5.2]{Man}

\begin{proposition}\label{BasisZero}
Suppose that $K$ is a number field of degree $n \geq 3$ and $\tr_{K/\mathbb{Q}}(O_K) =k \mathbb{Z}$, $k \in \mathbb{Z}^+$, then there exists a $\mathbb{Z}$-basis $\{1,\alpha_1, \ldots,\alpha_{n-1} \}$ of $O_K$ such that 
\[\left( \tr_{K/\mathbb{Q}}(\alpha_1), \ldots, \tr_{K/\mathbb{Q}}(\alpha_{n-2}),\tr_{K/\mathbb{Q}}(\alpha_{n-1}) \right)=:\left(t_1,\ldots,t_{n-2},t_{n-1} \right) \equiv (0,\ldots,0,k) \, \textrm{mod} \, n\]

and therefore $\{\alpha_1-t_1 /n, \ldots, \alpha_{n-2}-t_{n-2}/n,(n/k)\alpha_{n-1}-t_{n-1}/k\}$  is a basis of $O_K^0$.
\begin{remark}
If $n=2$, for any basis $\{1, \alpha_1\}$ of $O_K$ we have $\tr_{K/\mathbb{Q}}(\alpha_1)=t_1\equiv k \mod 2$ and $\{(2/k)\alpha_{1}-t_{1}/k\}$ is a basis of $O_K^{0}$.
\end{remark}

\end{proposition}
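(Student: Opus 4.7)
The plan is to first construct a $\mathbb{Z}$-basis $\{1,\alpha_1,\ldots,\alpha_{n-1}\}$ of $O_K$ whose traces have the prescribed residues modulo $n$; once this is in hand, the description of the claimed basis of $O_K^0$ follows from a routine linear-algebra verification.

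To build the basis of $O_K$, I would work in the quotient $O_K/\mathbb{Z}$. First, $k\mid n$ because $n=\tr_{K/\mathbb{Q}}(1)\in k\mathbb{Z}$; consequently $\tr_{K/\mathbb{Q}}$ descends to a $\mathbb{Z}$-linear map $\overline{\tr}\colon O_K/\mathbb{Z}\to \mathbb{Z}/n\mathbb{Z}$ whose image is $k\mathbb{Z}/n\mathbb{Z}$, a cyclic group of order $n/k$. The source $O_K/\mathbb{Z}$ is free of rank $n-1$ and $\ker\overline{\tr}$ is a finite-index sublattice whose quotient is cyclic, so the elementary-divisor theorem applied to the inclusion $\ker\overline{\tr}\hookrightarrow O_K/\mathbb{Z}$ forces its invariant factors to be $(1,\ldots,1,n/k)$. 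This produces a basis $\{\overline{\alpha}_1,\ldots,\overline{\alpha}_{n-1}\}$ of $O_K/\mathbb{Z}$ with $\{\overline{\alpha}_1,\ldots,\overline{\alpha}_{n-2},(n/k)\overline{\alpha}_{n-1}\}$ a basis of $\ker\overline{\tr}$; after flipping the sign of $\overline{\alpha}_{n-1}$ if necessary, $\overline{\tr}(\overline{\alpha}_i)=0$ for $i<n-1$ and $\overline{\tr}(\overline{\alpha}_{n-1})=k$ in $\mathbb{Z}/n\mathbb{Z}$. Lifting the $\overline{\alpha}_i$ to any $\alpha_i\in O_K$ yields the sought basis, since the $\overline{\alpha}_i$ form a $\mathbb{Z}$-basis of $O_K/\mathbb{Z}$.

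Given such a basis, the description of the basis of $O_K^0$ is then verified directly. Each listed element lies in $O_K^0$: for $i<n-1$, $t_i/n\in\mathbb{Z}$ makes $\tr(\alpha_i-t_i/n)=0$, and $t_{n-1}\in k\mathbb{Z}$ makes $\tr((n/k)\alpha_{n-1}-t_{n-1}/k)=0$. Linear independence is immediate from the expressions in $\{1,\alpha_1,\ldots,\alpha_{n-1}\}$. For spanning, any $\alpha=c_0+\sum c_i\alpha_i\in O_K^0$ satisfies $c_0n+\sum c_i t_i=0$; reducing modulo $n$ forces $kc_{n-1}\equiv 0\pmod n$, hence $(n/k)\mid c_{n-1}$, and writing $c_{n-1}=(n/k)d$ then determines $c_0$ uniquely and exhibits $\alpha$ as a $\mathbb{Z}$-linear combination of the proposed generators. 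The only mildly delicate step in the whole argument is the Smith normal form application: it is precisely the cyclicity of $\mathrm{image}(\overline{\tr})=k\mathbb{Z}/n\mathbb{Z}$ that forces a single nontrivial invariant factor equal to $n/k$ and yields the clean residue pattern $(0,\ldots,0,k)$ rather than a messier combination.
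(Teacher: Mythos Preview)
Your approach via the elementary-divisor theorem on $\ker\overline{\tr}\hookrightarrow O_K/\mathbb{Z}$ is cleaner than the paper's route. The paper proves an ad~hoc number-theoretic lemma (their Lemma~\ref{Elemt}) about coprimality of integer combinations, then uses it to build a $\mathrm{GL}_{n-1}(\mathbb{Z})$ change of basis by hand; you replace all of that with one invocation of Smith normal form, which is more conceptual and explains structurally why the residue pattern $(0,\dots,0,k)$ is attainable. Your verification that the displayed set spans $O_K^0$ is also more direct than the paper's index/discriminant count.

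There is, however, a genuine small gap in one step. After Smith normal form you only know that $\overline{\tr}(\overline{\alpha}_{n-1})$ \emph{generates} $k\mathbb{Z}/n\mathbb{Z}$, i.e.\ $\overline{\tr}(\overline{\alpha}_{n-1})=uk$ for some $u$ with $\gcd(u,n/k)=1$. Flipping the sign of $\overline{\alpha}_{n-1}$ only lets you reach $\pm uk$, not $k$ itself (for instance $n=8$, $k=1$, $u=3$ is not repaired by a sign change). The fix is easy and in fact uses the hypothesis $n\ge 3$ in an essential way: since $n-1\ge 2$, act on the last two basis vectors by a matrix $\left(\begin{smallmatrix}a&b\\c&d\end{smallmatrix}\right)\in\mathrm{GL}_2(\mathbb{Z})$ with $b\equiv 0$ and $d\equiv u^{-1}\pmod{n/k}$; such a matrix exists because $\gcd(u^{-1},\,n/k)=1$ allows one to solve $a\cdot u^{-1}-(n/k)\cdot c=1$. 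After this adjustment $\overline{\tr}(\overline{\alpha}_{n-2}')\equiv 0$ and $\overline{\tr}(\overline{\alpha}_{n-1}')\equiv k\pmod n$, as required. With this patch your argument is complete.
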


The proof  is based in the following elementary lemma:

\begin{lemma}\label{Elemt} Let $r$ and $m \geq 2$ be  integers, then:
\begin{enumerate}[(a)]
\item Given any sets of integers $\{u_1, \ldots, u_m \}$ and $\{s_1, \ldots, s_m \}$, there exist integers $\{c_1, \ldots, c_m \}$ such that $\sum_i c_i s_i=0$ and ${\rm{gcd}}(u_1-c_1, \ldots, u_m-c_m )=1$.
\item If ${\rm{gcd}}(r,s_1, \ldots, s_m)=1$, then there are integers $\{h_1, \ldots, h_m \}$ such that  \[{\rm{gcd}}(r h_1+s_1, \ldots, rh_m+s_m )=1.\]
\end{enumerate}
\end{lemma}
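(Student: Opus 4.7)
The plan is to prove parts (a) and (b) separately with elementary tools (Bezout/SNF and CRT); the hypothesis $m \ge 2$ will be used critically in both.

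For part (a), I would reformulate by setting $a_i := u_i - c_i$, turning the two conditions into $\sum_i a_i s_i = \sum_i u_i s_i =: T$ and $\gcd(a_1, \ldots, a_m) = 1$. Thus (a) amounts to exhibiting a primitive vector in the coset $\{a \in \Z^m : \sum_i a_i s_i = T\}$. If all $s_i = 0$ the coset is all of $\Z^m$ and $a = (1, 0, \ldots, 0)$ works. Otherwise let $g := \gcd(s_1, \ldots, s_m) > 0$ and apply iterated Bezout (i.e.\ Smith normal form on the row $s$) to obtain a unimodular $P \in \mathrm{GL}_m(\Z)$ with $sP = (g, 0, \ldots, 0)$. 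Passing to coordinates $a' := P^{-1} a$ preserves the gcd of coordinates (as $P$ is unimodular) and transforms the equation into $g a_1' = T$, forcing $a_1' = T/g \in \Z$ while $a_2', \ldots, a_m'$ remain free. Taking $a_2' = 1$ and $a_3' = \cdots = a_m' = 0$ yields $\gcd(T/g, 1, 0, \ldots, 0) = 1$, so $a$ is primitive. The hypothesis $m \ge 2$ is essential for the existence of the free coordinate $a_2'$.

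For part (b) I would argue directly via CRT, independently of (a). Put $g' := \gcd(s_2, \ldots, s_m)$ and split on whether $g' = 0$. If $g' = 0$, then $s_2 = \cdots = s_m = 0$ and the hypothesis reduces to $\gcd(r, s_1) = 1$; taking $h_2 = 1$ and the other $h_i = 0$ produces the tuple $(s_1, r, 0, \ldots, 0)$, whose gcd is $\gcd(s_1, r) = 1$. If $g' \neq 0$, I set $h_2 = \cdots = h_m = 0$ and choose $h_1$ via CRT so that $\gcd(rh_1 + s_1, g') = 1$. For each prime $p \mid g'$: when $p \mid r$, the hypothesis $\gcd(r, s_1, \ldots, s_m) = 1$ forces $p \nmid s_1$ and then $rh_1 + s_1 \equiv s_1 \not\equiv 0 \pmod p$ holds for free; when $p \nmid r$, only the single residue $h_1 \equiv -s_1 r^{-1} \pmod p$ is forbidden. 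Since $g' \neq 0$ has finitely many prime divisors and each forbids at most one residue modulo a prime $\ge 2$, CRT delivers a valid $h_1$.

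No deep fact is involved; the only subtlety is the edge case $g' = 0$ in (b), where setting every $h_i$ to zero would leave us needing $rh_1 + s_1 = \pm 1$, generally unsolvable even under the hypothesis. The remedy is to use a second coordinate to absorb $r$ into the gcd, and this is precisely the role of $m \ge 2$, mirroring the free coordinate $a_2'$ in (a). The lemma is clearly intended as a combinatorial gadget to be plugged into Proposition~\ref{BasisZero}.
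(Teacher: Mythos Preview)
Your proof is correct but follows a different route from the paper in both parts. For (a), the paper builds an exact sequence $\Z^m \xrightarrow{g} \Z^m \xrightarrow{f} s\Z \to 0$ (with $f(c)=\sum c_i s_i$), tensors it with $\mathbb{F}_p$ for each prime $p$ dividing $N:=\sum u_i s_i$ to show $\ker(\overline g)\neq\mathbb{F}_p^m$, and then uses CRT to pick $v$ with $g(v)\not\equiv u\pmod p$ for all such $p$; the case $N=0$ is handled separately. Your Smith-normal-form change of coordinates is more direct: once $sP=(g,0,\ldots,0)$, the freedom in the coordinate $a_2'$ immediately gives a primitive solution, and no case split on $N$ is needed. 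For (b), the paper \emph{deduces} the statement from (a) via Bezout---writing $1=ar+bs$, expressing $bs=\sum u_is_i$, invoking (a) to make the $u_i$ coprime after a kernel shift, and then choosing $h_i$ so that $\sum v_i(rh_i+s_i)=1$---whereas you argue directly by CRT on the prime divisors of $g'=\gcd(s_2,\ldots,s_m)$, independently of (a). Your argument is shorter and avoids the exact-sequence machinery; the paper's approach has the minor structural payoff that (b) is exhibited as a formal consequence of (a), but at the cost of a longer proof of (a). Both are perfectly adequate for the application in Proposition~\ref{BasisZero}.
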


\begin{proof}

Let $s:=\textrm{gcd}(s_1, \ldots, s_m )$ and consider the surjective map \[f: \mathbb{Z}^m \rightarrow s \mathbb{Z}, (c_1, \ldots, c_m) \mapsto \sum_i c_i s_i\]
since $\mathbb{Z}$ is a PID, there is a exact sequence of $\mathbb{Z}$-modules $\mathbb{Z}^m \xrightarrow{g} \mathbb{Z}^m \xrightarrow{f} s\mathbb{Z} \rightarrow 0  $.\\

For a prime $p$ denote  $v \mapsto \overline{v} $ the canonical projection  $\mathbb{Z} \rightarrow \mathbb{F}_p$ and $\overline{g}: \mathbb{F}_p^m \rightarrow \mathbb{F}_p^m $ the map induced by $g$. We claim that  $\textrm{ker}(\overline{g}) \neq  \mathbb{F}_p^m$. Indeed, tensoring with $\rule{0.3cm}{0.15mm}\otimes_{\mathbb{Z}}\mathbb{F}_p$, we get an exact sequence of $\mathbb{F}_p$-spaces
\[\mathbb{F}_p^m \xrightarrow{\overline{g}} \mathbb{F}_p^m \xrightarrow{\overline{f}} \overline{s} \,\mathbb{F}_p\rightarrow 0  \]
thus if $\textrm{Im}(\overline{g})=0$ we would have $\mathbb{F}_p^{m} \cong \overline{s}\,\mathbb{F}_p$, which contradicts $m \geq 2$.\\

Now set $N:=\sum_i u_i s_i $ and $u:=(u_1, \ldots, u_m)$. First suppose $N\neq 0$ and for each prime $p$ dividing $N$ consider the set 
\[X_p:=\{ \overline{v} \in \mathbb{F}_p^m: \overline{g}(\overline{v}) =\overline{u} \} \]
then either $X_p=\emptyset$ or $X_p=\overline{v_0}+\textrm{ker}(\overline{g})$ with $\overline{v_0} \in X_p$ and from the above paragraph  follows that $X_p\neq \mathbb{F}_p^m$, so we may pick $ \overline{v}_p \in \mathbb{F}_p^m$ such that $\overline{g}(\overline{v_p}) \neq \overline{u}$. By the Chinese remainder theorem we can choose $v \in \mathbb{Z}^m$ such that
\[ v\equiv v_p \mod p \textnormal{ for all }\, p \mid N \]

(if $N=\pm 1$ any $v\in \mathbb{Z}^{m}$ will satisfy this condition). Define $c=(c_1, \ldots, c_m):=g(v) \in \textrm{ker}(f)$, then  $\sum_i c_i s_i=0$  and the integers $\{u_i-c_i\}$ are coprime, otherwise there would be a prime $p$ such that $p \mid u_i-c_i$ for all $i\leq m$,  so $p \mid \sum_i (u_i-c_i) s_i=N$  and we would get
\[ u \equiv c = g(v) \equiv g(v_p) \not\equiv u \mod p \]

a contradiction. This proves $(a)$ whenever $N\neq0$. On the other hand, if $N=0$  pick any non-zero element $(d_1,\ldots,d_m)\in \textrm{Ker}(f)$ (which exists because $m\geq2$) and take $c_i:=u_i-d_i/d$, where $d=\textrm{gcd}(d_1,\ldots,d_m)$. Then, $\sum_{i}c_{i}s_i=N-\sum_{i}d_is_i/d=0$ and $\textrm{gcd}(u_1-c_1,\ldots,u_m-c_m)=\textrm{gcd}(d_1/d,\ldots,d_m/d)=1$, so $(a)$ also holds in this case.\\

To prove  $(b)$, write $1=ar+bs$ with  $a,b \in \mathbb{Z}$ and write $bs=\sum_i u_i s_i$ for some integers $\{ u_i\}$,  by  part $(a)$ exists $\{c_i\}$ such that $\sum_i c_i s_i=0$ and the integers $ v_i:=u_i-c_i$ are coprime. Let $\{h_i\}$ be such that $\sum_i v_i h_i=a$ then

\[ \sum_i v_i (rh_i+s_i)=r \sum_i v_i h_i+ \sum_i v_i s_i=ra+bs=1 \]

and therefore the integers $\{rh_i+s_i \}$ are coprime.

\end{proof}

\begin{proof}[Proof of Proposition \ref{BasisZero}] Let $\{1,\beta_1,\ldots,\beta_{n-1}\}$ be a basis of $O_K$ and let $s_i:=\tr_{K/\mathbb{Q}}(\beta_{i})$ for $ 1 \leq  i \leq n-1$. By hypothesis $(1/k) \tr_{K/\mathbb{Q}}(O_K)=\langle n/k,s_1/k, \ldots, s_{n-1}/k\rangle_{\mathbb{Z}} = \mathbb{Z}$, so applying part $(b)$ of the above lemma  to $m:=n-1 \geq 2$ and $r:=n/k$,  we find $\{h_1,\ldots,h_{n-1} \}$ such that the integers $\{r h_1+s_1/k,\ldots, r h_{n-1}+s_{n-1}/k \}$ are coprime and therefore the last column of some matrix $A$ in $\textrm{GL}_{n-1}(\mathbb{Z})$, that is,
\[(rh_1+s_1/k, \ldots,rh_{n-1}+s_{n-1}/k)^t=A \,(0, \ldots,0,1)^t\]

now define the basis $\{1,\alpha_1,\ldots,\alpha_{n-1}\}$ by the relation

\[ \hspace{4cm} (1,\alpha_1,\ldots,\alpha_{n-1})^t:=\begin{bmatrix}
   1 & 0\\\
   0 & A^{-1}
 \end{bmatrix}  (1,\beta_1,\ldots,\beta_{n-1})^t \] 
then  $(\alpha_1,\ldots,\alpha_{n-1})^t=A^{-1}\, (\beta_1,\ldots,\beta_{n-1})^t$ and 
\begin{equation*}
\begin{split}
(1/k)\left( \tr_{K/\mathbb{Q}}(\alpha_1), \ldots,\tr_{K/\mathbb{Q}}(\alpha_{n-1}) \right)^t &=A^{-1} (s_1/k, \ldots,s_{n-1}/k)^t\\
& \equiv A^{-1} (rh_1+s_1/k, \ldots,rh_{n-1}+s_{n-1}/k)^t \mod r \\
&= (0, \ldots,0, 1)^t  \mod r
\end{split}
\end{equation*}
hence $\left( \tr_{K/\mathbb{Q}}(\alpha_1), \ldots,\tr_{K/\mathbb{Q}}(\alpha_{n-1}) \right) \equiv (0,\ldots,0,k) \, \textrm{mod} \, n$.  \\

To prove that   $\{\alpha_1-t_1 /n, \ldots, \alpha_{n-2}-t_{n-2}/n,(n/k)\alpha_{n-1}-t_{n-1}/k\}$ is a $\mathbb{Z}$-basis of $O_K^0$ note that its $\mathbb{Z}$-span  $C$  is clearly contained in $O_K^0$ and the $\mathbb{Z}$-span of  $\{1,\alpha_1-t_1 /n, \ldots, \alpha_{n-2}-t_{n-2}/n,(n/k)\alpha_{n-1}-t_{n-1}/k\}$ has index $n/k$. Indeed, the matrix expressing  $\{1,\alpha_1-t_1 /n, \ldots, \alpha_{n-2}-t_{n-2}/n,(n/k)\alpha_{n-1}-t_{n-1}/k\}$ in terms of the integral basis $\{1,\alpha_1,\ldots,\alpha_{n-1}\}$ of $K$ is \[\begin{bmatrix}
1&0&0& \ldots &0\\
-t_1/n & 1 & 0 & \ldots &0\\
-t_2/n & 0 & 1 & \ldots &0\\
\vdots & \vdots & \vdots & \ddots & \vdots\\
-t_{n-1}/k & 0 &0 & \ldots &n/k
\end{bmatrix}\]

so its determinant is $n/k$. It follows that  $\d(C)=\frac{n}{k^2} \,\d(K)=\d(O_K^0)$ and therefore $C=O_K^0.$   

\end{proof}

\subsection{Proofs of the main results}

We are now ready to prove the following partial reciprocal of \cite[Lemma 5.1]{Casimir}.

\begin{theorem}\label{MainThLift} Let $K$ be a number field of degree $n$, suppose  that $\tr_{K/\mathbb{Q}}(O_K) = \mathbb{Z}$ and $ \left(\mathbb{Z}/n \mathbb{Z}\right)^\times$ is cyclic. Then given a number field $L$ we have that:
\begin{enumerate}[(i)]
\item Every isometry $\varphi: (O_K^{\bot},\tr_{K/\mathbb{Q}}) \xrightarrow{\sim} (O_L^{\bot},\tr_{L/\mathbb{Q}})$ extends to an isometry \[\phi: (O_K,\tr_{K/\mathbb{Q}}) \xrightarrow{\sim} (O_L,\tr_{L/\mathbb{Q}}).\]
\item If $\left(n,\d(K)\right)=1$, then every isometry $\varphi: (O_K^0,\tr_{K/\mathbb{Q}})  \xrightarrow{\sim}  (O_L^0,\tr_{L/\mathbb{Q}})$ also extends to an isometry $\phi: (O_K,\tr_{K/\mathbb{Q}}) \xrightarrow{\sim} (O_L,\tr_{L/\mathbb{Q}}).$
\end{enumerate}
\end{theorem}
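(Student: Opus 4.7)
The plan is to invoke Lemma \ref{Lifting} directly in each case: the existence of an extension is equivalent to a mod $n$ congruence that we can extract from mod $n^2$ identities of the trace form, with the cyclic structure of $(\mathbb{Z}/n\mathbb{Z})^\times$ entering only at the very end to identify classes satisfying $c^2\equiv1\bmod n$ as $c\equiv\pm 1$. The hypothesis $\tr_{K/\mathbb{Q}}(O_K)=\mathbb{Z}$ puts us in the $k=1$ setting of Proposition \ref{BasisZero}.

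For (i), I would fix the basis $\{1,\alpha_1,\ldots,\alpha_{n-1}\}$ of $O_K$ provided by that proposition, so that $(t_1,\ldots,t_{n-1})\equiv(0,\ldots,0,1)\bmod n$. Expanding $\alpha_{i\bot}\alpha_{j\bot}=(n\alpha_i-t_i)(n\alpha_j-t_j)$ and taking the trace gives
\[ \tr_{K/\mathbb{Q}}(\alpha_{i\bot}\alpha_{j\bot})\equiv -n\,t_it_j \pmod{n^2}. \]
Writing $\varphi(\alpha_{i\bot})=n\beta_i-s_i$ with $\beta_i\in O_L$ and applying the same identity on the $L$-side, the isometry yields $s_is_j\equiv t_it_j\bmod n$ for all $i,j<n$. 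Taking $i=j=n-1$ gives $s_{n-1}^2\equiv 1\bmod n$, so the cyclic hypothesis forces $s_{n-1}\equiv\pm 1$; the sign then propagates, since for $i<n-1$ the relation $s_is_{n-1}\equiv t_i\cdot 1\equiv 0\bmod n$ gives $s_i\equiv 0\equiv\pm t_i\bmod n$. Lemma \ref{Lifting}(i) then produces the extension.

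For (ii), I would first upgrade the hypothesis to $\tr_{L/\mathbb{Q}}(O_L)=\mathbb{Z}$. Writing $\tr_{L/\mathbb{Q}}(O_L)=k'\mathbb{Z}$, Lemma \ref{DetsRelativos} combined with the isometry gives $\d(L)=k'^2\d(K)$, and for any prime $p\mid k'$ one has $p\mid n$ and hence $p\nmid\d(K)$, so $v_p(\d(L))=2v_p(k')$; on the other hand, Maurer's inequality $v_p(k')\le\lfloor v_\mathfrak{p}(\mathcal{D}_{L/\mathbb{Q}})/e(\mathfrak{p}\mid p)\rfloor$ combined with $v_p(\d(L))=\sum_\mathfrak{p} v_\mathfrak{p}(\mathcal{D})f(\mathfrak{p}\mid p)$ gives $v_p(\d(L))\ge n\,v_p(k')$, which is incompatible with $v_p(k')\ge 1$ once $n\ge 3$. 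Next, pick $\gamma_K,\gamma_L$ of trace $1$ and set $\gamma_0:=1-n\gamma_K$ and $\gamma_0^L:=1-n\gamma_L$. The identity $\tr_{K/\mathbb{Q}}(x\gamma_0)=-n\tr_{K/\mathbb{Q}}(x\gamma_K)$ for $x\in O_K^0$ shows that $\gamma_0$ lies in the radical of the reduced trace form on $O_K^0/nO_K^0$; an elementary-divisor count using $\det(\mathrm{Gram})=n\,\d(K)$ with $(n,\d(K))=1$ shows this radical has order exactly $n$, and since $m\gamma_0\in nO_K^0$ forces $m\in n\mathbb{Z}$, the class $\gamma_0$ has order $n$ and therefore generates the whole radical; likewise for $\gamma_0^L$.

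As $\varphi$ is an isometry, it maps the two radicals onto each other, so $\varphi(\gamma_0)\equiv c\,\gamma_0^L\bmod nO_L^0$ for some unit $c\in(\mathbb{Z}/n\mathbb{Z})^\times$. Writing $\varphi(\gamma_0)=c\gamma_0^L+ny$ with $y\in O_L^0$, the cross term $2cn\,\tr_{L/\mathbb{Q}}(\gamma_0^L y)$ vanishes mod $n^2$ because $\gamma_0^L$ is in the radical, whence
\[ \tr_{L/\mathbb{Q}}(\varphi(\gamma_0)^2)\equiv c^2\,\tr_{L/\mathbb{Q}}((\gamma_0^L)^2)\pmod{n^2}. \]
Combined with the direct evaluations $\tr_{K/\mathbb{Q}}(\gamma_0^2)\equiv -n\equiv\tr_{L/\mathbb{Q}}((\gamma_0^L)^2)\bmod n^2$ and the identity $\tr_{L/\mathbb{Q}}(\varphi(\gamma_0)^2)=\tr_{K/\mathbb{Q}}(\gamma_0^2)$ from the isometry, this gives $c^2\equiv 1\bmod n$, and the cyclic hypothesis yields $c\equiv\pm 1\bmod n$. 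Since $\gamma_0^L\equiv 1\bmod nO_L$, this is exactly the congruence $\varphi(\gamma_0)\equiv\pm 1\bmod nO_L$ required by Lemma \ref{Lifting}(ii). I expect the main obstacle to be the radical analysis in (ii): one must simultaneously justify $\tr_{L/\mathbb{Q}}(O_L)=\mathbb{Z}$ via the ramification estimate, and pin down that the mod $n$ radical of $O_K^0$ is cyclic of order exactly $n$ with $\gamma_0$ as a generator, which is what compresses the unknown isometry into the single scalar $c\bmod n$.
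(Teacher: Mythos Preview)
Your proposal is correct. Part (i) is essentially the paper's argument: both extract $s_is_j\equiv t_it_j\bmod n$ from the $\bmod\,n^2$ identity for $\tr(\alpha_{i\bot}\alpha_{j\bot})$ and finish with the cyclic hypothesis. You specialize to the basis of Proposition~\ref{BasisZero} so that $(t_1,\ldots,t_{n-1})\equiv(0,\ldots,0,1)$, which short-circuits the paper's step of producing a unit $u=\sum u_is_i$ from an arbitrary basis; the content is the same.

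Part (ii) is a genuinely cleaner repackaging of the paper's computation. The paper works in the explicit basis $w_1,\ldots,w_{n-1}$ of Proposition~\ref{BasisZero}, forms the modified Gram matrix $G^*$ with $\det G^*=\d(K)$ coprime to $n$, and deduces that the coefficients $l_i$ of $1-n\gamma_L$ vanish $\bmod\,n$ for $i<n-1$ before squaring to get $l_{n-1}^2\equiv 1$. Your radical argument is the coordinate-free version of the same fact: the kernel of the trace form on $O_K^0/nO_K^0$ is exactly the line spanned by $w_{n-1}$ (equivalently $-\gamma_0$), so any isometry sends $\gamma_0$ to a unit multiple of $\gamma_0^L$. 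The order count from $\det G=n\,\d(K)$ together with $\gamma_0$ having order $n$ indeed pins down the radical as cyclic, and your $\bmod\,n^2$ computation of $\tr(\varphi(\gamma_0)^2)$ is the direct analogue of the paper's squaring step. Your verification that $\tr_{L/\mathbb{Q}}(O_L)=\mathbb{Z}$ via the inequality $v_p(\d(L))\ge n\,v_p(k')$ from Lemma~\ref{TraceMau} is a valid alternative to the paper's shortcut $k'^n\mid\d(L)$ (every Gram entry lies in $k'\mathbb{Z}$); both give $l^{n-2}\mid\d(K)$ and hence $l=1$. One small omission: you should note, as the paper does, that $n\le 2$ is handled separately (trivially), since your contradiction $2v_p(k')\ge n\,v_p(k')$ needs $n\ge 3$.
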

\vspace*{0.3cm}

\begin{proof}
Case $n=1$ is trivial and case $n=2$ is easy to check, so let us suppose that $n \geq 3$.
\begin{enumerate}[(i)]
\item Let $\{1,\alpha_1, \ldots, \alpha_{n-1}\}$ be any basis of $O_K$. Take $t_i:=\tr_{K/\mathbb{Q}}(\alpha_i)$, $1 \leq i < n$, so that $\alpha_{i\bot}=n\alpha_{i}-t_i$ is a basis of $O_K^{\bot}$ and for each  $1 \leq i < n$  choose $\beta_i \in O_L$ and $s_i \in \mathbb{Z}$  such that 
\[ y_i:=\varphi(\alpha_{i\bot})=n \beta_i-s_i \in O_L^{\bot}\]

then $\tr_{L/\mathbb{Q}}(y_i y_j)=n^2\tr_{L/\mathbb{Q}}(\beta_i \beta_j)-ns_is_j \equiv -n s_i s_j \mod n^2$ and since $\tr_{L/\mathbb{Q}}(y_i y_j)=\tr_{K/\mathbb{Q}}(\alpha_{i\bot} \alpha_{j\bot}) \equiv -n t_i t_j \mod n^2$ we conclude \[ t_it_j \equiv s_i s_j \mod n \textnormal{, for all} \, 1 \leq i,j<n \]

Let $\{ u_i\}$ be integers such that $\sum_i u_i t_i \equiv 1 \mod n$ and take $u:=\sum_i u_i s_i$, then $ s_i u \equiv t_i \mod n$ for all $1\leq i< n $,
in particular $(u,n)=1$ and if $v$ is its inverse modulo $n$ then
\[t_i t_j \equiv  v^2  t_i t_j \mod n \textnormal{, for all} \, 1 \leq i,j<n \]

thus $v^2 \equiv 1 \mod n $ and as  $ \left(\mathbb{Z}/n \mathbb{Z}\right)^\times$ is cyclic this implies $v \equiv \pm 1 \mod n$, thus $ s_i  \equiv \pm t_i \mod n \, \textnormal{ for all} \, 1 \leq i<n$ and by Lemma $\ref{Lifting}$ either $\phi=\varphi^+$ or $\phi=\varphi^-$ extend $\varphi$ to an isometry $(O_K,\tr_{K/\mathbb{Q}}) \cong (O_L,\tr_{L/\mathbb{Q}}).$

\item First notice that we must have $\tr_{L/\mathbb{Q}}(O_L)=\mathbb{Z}$. If $l$ is a positive integer such that $\tr_{L/\mathbb{Q}}(O_L)=l\Z$  we have, thanks to Lemma \ref{DetsRelativos}, that the existence of the isometry $\varphi$ implies $n \, \d(K)=\frac{n}{l^2} \, \d(L)$. Notice that $\tr_{L/\mathbb{Q}}(O_L)=l \,\mathbb{Z}$ implies that $l^{n} \mid \d(L)$ and $l \mid n$. In particular, $ l^{n-2} \mid  \left(l^{-2} \d(L)\right)$ i.e., $ l^{n-2} \mid \d(K)$. Since $l \mid n$ and $\left(  \d(K),n\right)=1$, we conclude that $l=1$.

Now, take a basis $\{1,\alpha_1, \ldots, \alpha_{n-1}\}$ of $O_K$  and $\{t_1,\ldots,t_{n-1}\}$ as in Proposition \ref{BasisZero} so that \[w_1:=\alpha_1-t_1/n,\ldots,w_{n-2}:=\alpha_{n-2}-t_{n-2}/n,w_{n-1}:=n\alpha_{n-1}-t_{n-1}\] is a basis of $O_K^0$. Note that for all $1\leq i<n$ we have 
\begin{equation}\label{eqn:ngood}
\tr_{K/\mathbb{Q}}(w_i w_{n-1})=\tr_{K/\mathbb{Q}}\left(w_i(n \alpha_{n-1}-t_{n-1})\right)=n\tr_{K/\mathbb{Q}}(w_i\alpha_{n-1}) \equiv 0 \mod n\tag{$*$} 
\end{equation}

 and for $i=n-1$
\[  (1/n)\tr_{K/\mathbb{Q}}( w_{n-1}^2) =\tr_{K/\mathbb{Q}}(n\alpha_{n-1}^2-t_{n-1}\alpha_{n-1})=n\tr_{K/\mathbb{Q}}(\alpha_{n-1}^2)-t_{n-1}^2 \equiv -1 \mod n\]

Let us define $\theta_i:=\varphi(w_i)$, $1\leq i<n$. Take $\gamma_L \in O_L$ such that $\tr_{K/\mathbb{Q}}(\gamma_L)=1$ and write \[ 1-\gamma_L \, n =\sum_{i=1}^{n-1} l_i \theta_i \textnormal{ with } l_i \in \mathbb{Z}\]

taking traces in the congruences $\theta_j \equiv \sum_{i}l_i \theta_i \theta_j \mod n \,$ we find that the Gram matrix $G:=(\tr_{K/\mathbb{Q}}(w_i w_j))=(\tr_{L/\mathbb{Q}}(\theta_i \theta_j) )$ satisfies \[G \, (l_1,\ldots,l_{n-1})^t \equiv 0 \mod n\]
Call $G^{*}$ the matrix obtained from $G$  dividing the last column by $n$, which has integer entries thanks to  \eqref{eqn:ngood}, then \[G^{*} \, (l_1,\ldots,l_{n-2},0)^t \equiv G^{*} \, (l_1,\ldots,l_{n-2,}nl_{n-1})^t=G \, (l_1,\ldots,l_{n-2},l_{n-1})^t \equiv 0 \mod n.\]

From Lemma \ref{DetsRelativos} we know that $\textrm{det}(G^{*})=\d(K)$ which by hypothesis is coprime to $n$. It follows that $l_i \equiv 0 \mod n $ for all $1\leq i \leq n-2$ and thus $l_{n-1} \theta_{n-1}-1 \equiv 0 \mod n$, squaring this congruence and taking traces again we find
\begin{align*}
l_{n-1}^2 \theta_{n-1}^2-2l_{n-1} \theta_{n-1}+1 \equiv 0 \mod n^2 & \Rightarrow l_{n-1}^2 \tr_{L/\mathbb{Q}}(\theta_{n-1}^2)+n \equiv 0 &\mod n^2\\
 & \Rightarrow l_{n-1}^2 \tr_{K/\mathbb{Q}}(w_{n-1}^2)+n \equiv 0 &\mod n^2\\
  & \Rightarrow l_{n-1}^2 (-n)+n \equiv 0 &\mod n^2\\
  & \Rightarrow l_{n-1}^2 (-1)+1 \equiv 0 &\mod n\,\,\\
  & \Rightarrow l_{n-1}^2 \equiv 1 &\mod n\,\,
\end{align*}

Since $\left(\mathbb{Z}/n \mathbb{Z}\right)^\times$ is cyclic, this implies $l_{n-1} \equiv \pm 1 \mod n$, now let \[\gamma_K:=\alpha_{n-1}-(t_{n-1}-1)/n=(1/n)(w_{n-1}+1) \in O_K\] then, $\tr_{K/\mathbb{Q}}(\gamma_K)=1$ and $\varphi(1-n\gamma_K)=-\theta_{n-1} \equiv \mp 1 \mod n$, thus we are done by  Lemma $\ref{Lifting}(ii)$. 
\end{enumerate}
\end{proof}

\begin{corollary}\label{DiscFundEquiv}
 Let $K$ be totally real number field of fundamental discriminant and degree $n\ge 3$. If  $ \left(\mathbb{Z}/n \mathbb{Z}\right)^\times$ cyclic, then for any number field $L$ the following are equivalent:
\begin{enumerate}[(i)]
\item $(O_K,\tr_{K/\mathbb{Q}})  \cong (O_L,\tr_{L/\mathbb{Q}})$.

\item $(O_K^{\bot},\tr_{K/\mathbb{Q}})  \cong (O_L^{\bot},\tr_{L/\mathbb{Q}})$.

\item ${\rm Sh}(K)={\rm Sh}(L)$ and $L$ is totally real with fundamental discriminant.
\end{enumerate}
If $\left(n,\d(K)\right)=1$ then the three items are also equivalent to (iv) $(O_K^{0},\tr_{K/\mathbb{Q}})  \cong (O_L^{0},\tr_{L/\mathbb{Q}})$.
\end{corollary}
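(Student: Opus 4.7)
The plan is to traverse the cycle (ii) $\Rightarrow$ (i) $\Rightarrow$ (ii) and (ii) $\Rightarrow$ (iii) $\Rightarrow$ (ii), and then to handle (iv) separately. First I note that Corollary \ref{TraceZ}(iii) applies, since $K$ has fundamental discriminant and $n\geq 3$, giving $\tr_{K/\mathbb{Q}}(O_K)=\mathbb{Z}$; this is precisely the hypothesis needed to invoke Theorem \ref{MainThLift}. Thus (ii) $\Rightarrow$ (i) is immediate from Theorem \ref{MainThLift}(i). For (i) $\Rightarrow$ (ii), given an isometry $\phi:(O_K,\tr_{K/\mathbb{Q}})\xrightarrow{\sim}(O_L,\tr_{L/\mathbb{Q}})$, the positive definiteness of $\tr_{K/\mathbb{Q}}$ transfers through $\phi$, so $\tr_{L/\mathbb{Q}}$ is positive definite on $O_L$ and $L$ is totally real. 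I would then use AM--GM: for a nonzero $\alpha\in O_L$, $\tr_{L/\mathbb{Q}}(\alpha^{2})=\sum_\sigma\sigma(\alpha)^{2}\geq n|N_{L/\mathbb{Q}}(\alpha)|^{2/n}\geq n$, with equality forcing $|\sigma(\alpha)|=1$ for every embedding; since all $\sigma(\alpha)$ are real and the minimal polynomial of $\alpha$ is irreducible over $\mathbb{Q}$, this implies $\alpha=\pm 1$. Therefore $\tr_{L/\mathbb{Q}}(\phi(1)^{2})=n$ forces $\phi(1)=\pm 1$, hence via the identity $\tr(x)=b(x,1)$ we get $\tr_{L/\mathbb{Q}}\circ\phi=\pm\tr_{K/\mathbb{Q}}$, and the description $O^{\bot}=\{n\alpha-\tr(\alpha):\alpha\in O\}$ yields $\phi(O_{K}^{\bot})=O_{L}^{\bot}$; the restriction of $\phi$ is the isometry in (ii).

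For (ii) $\Rightarrow$ (iii), Lemma \ref{DetsRelativos} gives $\d(K)=\d(L)$, so $\d(L)$ is fundamental; positive definiteness of $\tr_{L/\mathbb{Q}}$ on the rank-$(n-1)$ lattice $O_{L}^{\bot}$ extends to positive definiteness on all of $L$ via the orthogonal decomposition $L=\mathbb{Q}\,\bot\,L^{0}$, whence $L$ is totally real; and since $b_{K}=\tr_{K/\mathbb{Q}}$ and $b_{L}=\tr_{L/\mathbb{Q}}$ for totally real fields, the trace-form isometry is precisely the shape equality with scale factor $\lambda=1$. For the reverse (iii) $\Rightarrow$ (ii), after identifying $b=\tr$ on both sides, the shape equality supplies an isometry $(O_{K}^{\bot},\tr_{K/\mathbb{Q}})\cong(O_{L}^{\bot},\lambda\tr_{L/\mathbb{Q}})$ for some $\lambda\in\mathbb{Q}_{>0}$, and one must show $\lambda=1$. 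From Lemma \ref{DetsRelativos} one reads $\d(K)=\lambda^{n-1}\d(L)$; writing $\lambda=u/v$ in lowest terms, for each prime $r$ the identity $v_{r}(\d(K))-v_{r}(\d(L))=(n-1)(v_{r}(u)-v_{r}(v))$ admits only the trivial solution $v_{r}(u)=v_{r}(v)=0$ whenever the left-hand side is strictly smaller in absolute value than $n-1$. Since fundamental discriminants satisfy $v_{r}(\d)\in\{0,1\}$ for odd $r$ and $v_{2}(\d)\in\{0,2,3\}$, this forces $\lambda=1$ at every odd prime (as $n-1\geq 2>1$) and also at $r=2$ whenever $n\geq 5$ (as $n-1\geq 4>3$).

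The main obstacle is the residual cases $n\in\{3,4\}$, where the $2$-adic divisibility alone admits configurations $(v_{2}(\d(K)),v_{2}(\d(L)))\in\{(2,0),(0,2),(3,0),(0,3)\}$ with $\lambda\in\{2,1/2\}$. To rule these out I would exploit the concrete form of the isometry: the bases of $O_{K}^{\bot}$ produced by Proposition \ref{BasisZero}, together with the congruences modulo $n$ and $n^{2}$ appearing in the proof of Theorem \ref{MainThLift}(i), pin down the $2$-adic behaviour of the trace pairing on $O_{K}^{\bot}$ and $O_{L}^{\bot}$, and a careful comparison should force $2$-adic compatibility of the ramification data of $K$ and $L$ that is incompatible with one being wildly ramified at $2$ while the other is not. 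Finally, assuming $(n,\d(K))=1$, the equivalence (i) $\Leftrightarrow$ (iv) follows by the same template: (iv) $\Rightarrow$ (i) is Theorem \ref{MainThLift}(ii), whose hypotheses are guaranteed by the coprimality, and (i) $\Rightarrow$ (iv) is immediate from $\phi(1)=\pm 1$, which forces $\phi$ to restrict to an isometry $O_{K}^{0}=\ker(\tr_{K/\mathbb{Q}})\xrightarrow{\sim}\ker(\tr_{L/\mathbb{Q}})=O_{L}^{0}$.
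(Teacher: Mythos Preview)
Your overall architecture matches the paper's: (i)$\Leftrightarrow$(ii) via Theorem~\ref{MainThLift}(i) and the restriction argument, (ii)$\Rightarrow$(iii) via Lemma~\ref{DetsRelativos} and total reality, and (iii)$\Rightarrow$(ii) by showing the scale $\lambda$ must be $1$ through discriminant valuations. Your self-contained proof of (i)$\Rightarrow$(ii) (and (i)$\Rightarrow$(iv)) via AM--GM is correct and replaces the paper's external citation of \cite[Lemma~5.1]{Casimir}.

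The genuine gap is in the residual cases of (iii)$\Rightarrow$(ii). First, your list of surviving configurations is too coarse: the relation $v_2(\d(K))-v_2(\d(L))=(n-1)\bigl(v_2(u)-v_2(v)\bigr)$ already forces the difference to be divisible by $n-1$, so for $n=3$ only $(2,0)$ and $(0,2)$ survive, and for $n=4$ only $(3,0)$ and $(0,3)$. Second, and more importantly, your closing sentence (``a careful comparison should force 2-adic compatibility of the ramification data\ldots'') is not a proof, and for $n=3$ it is not even the right mechanism. The paper dispatches $n=3$ by Stickelberger's criterion: if $\lambda=2$ then either $v_2(\d(L))=1$ or $\d(L)=\d(K)/4\equiv 3\pmod 4$, both impossible for the discriminant of a number field. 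No ramification analysis is needed. For $n=4$ the paper does use a congruence in the spirit you suggest, but it is a specific one-line computation you have not carried out: with $\lambda=2$ one has $-(1/4)\tr_{K/\mathbb{Q}}(x_ix_j)\equiv t_it_j\pmod 4$ and $-(1/4)\tr_{K/\mathbb{Q}}(x_ix_j)=-(1/2)\tr_{L/\mathbb{Q}}(y_iy_j)\equiv 2s_is_j\pmod 4$, whence $t_it_j\equiv 0\pmod 2$ for all $i,j$, contradicting $\tr_{K/\mathbb{Q}}(O_K)=\mathbb{Z}$. Supplying these two short arguments would complete your proof along the paper's lines.
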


\begin{proof}
The equivalences $(i) \iff (ii)$ and $(i) \iff (ii) \iff (iv)$ under the additional conditions follow from \cite[Lemma 5.1]{Casimir}, Theorem $\ref{MainThLift}$ and the fact that $\tr_{K/\mathbb{Q}}(O_K)=\mathbb{Z}$ (since $K$ has fundamental discriminant). It remains to prove $(ii) \iff (iii)$: \\

$\Rightarrow)$ Since $L$ is totally real  $b_K=\tr_{K/\mathbb{Q}}$ and ${\rm Sh}(K)={\rm Sh}(L)$. Furthermore, $\d(L)=\d(K)$ is fundamental.\\
$\Leftarrow)$  Let $\lambda \in \mathbb{R}^\times$ be such that  an isometry $\varphi:(O_K^{\bot},\tr_{K/\mathbb{Q}}) \rightarrow (O_L^{\bot}, \lambda \, \tr_{L/\mathbb{Q}}) $ exists. Let $\{1,\alpha_1,\ldots,\alpha_{n-1} \}$ be a basis of $O_K$, take  $x_i:=\alpha_{i\bot}=n\alpha_i-t_i$, $1\leq i<n$ $t_i:=\tr_{K/\mathbb{Q}}(\alpha_i)$, as basis of $O_K^{\bot}$ and define $y_i:=\varphi(x_i)=n\beta_i-s_i$, $\beta_i \in O_L$, $s_i\in \mathbb{Z}$. Then, $\tr_{K/\mathbb{Q}}(x_ix_j)= \lambda \, \tr_{L/\mathbb{Q}}(y_iy_j) $ for all $1\leq i,j <n$, in particular is a positive rational so we may write $\lambda=r/s$ where $r$ and $s$ are coprime positive integers.\\

Since $n^{2n-3}\d(K)=\d(O_K^{\bot})=\det(O_L^{\bot},\lambda \, \tr_{L/\mathbb{Q}})=\lambda^{n-1}n^{2n-3} \d(L)$ we know that \[ s^{n-1}\d(K)=r^{n-1}\d(L)\]

Thus $r^2 \mid \d(K)$ and if we suppose $r \neq 1$ then $r=2^u$ with \[n-1 \leq (n-1)u\leq v_2(\d(K)) \leq 3\]
also $s^2 \mid \d(L)$ and $(r,s)=1$ forces $s=1$. Therefore $\d(K)=2^{u(n-1)} \d(L)$ and:
\begin{itemize}
\item If $n=4$, we would have $u=1$ and $\lambda=2$, but  since $ (1/4) \tr_{K/\mathbb{Q}}(x_ix_j) \equiv -t_it_j \mod 4$ and $(1/4)\tr_{L/\mathbb{Q}}(y_iy_j) \equiv -s_is_j \mod 4$, this implies
\[t_i t_j\equiv -(1/4) \tr_{K/\mathbb{Q}}(x_ix_j)=-(1/2) \tr_{K/\mathbb{Q}}(y_iy_j) \equiv 2s_is_j \mod 4 \]
which yields $t_it_j \equiv 0 \mod 2$ for all $1\leq i,j<n $ contradicting $\tr_{K/\mathbb{Q}}(O_K)=\mathbb{Z}$.
 \item If $n=3$, we would have $u=1$, so either $v_{2}(\d(K))=2$ and $v_{2}(\d(L))=0$ in which case $\d(L)=\d(K)/4 \equiv 3 \mod 4$ or  $v_{2}(\d(K))=3$ and $v_{2}(\d(L))=1$. Both cases lead to a contradiction since by Stickelberger's criterion, applied to $\d(L)$, the discriminant of a number field is equal to either $0$ or $1$ modulo $4$.
\end{itemize}
Thus $r=1$ and by symmetry of the argument $s=1$, therefore $\lambda=1$ and $(O_K^{\bot},\tr_{K/\mathbb{Q}})  \cong (O_L^{\bot},\tr_{L/\mathbb{Q}})$, as required.

\end{proof}

\begin{theorem}\label{Main}
 Let $K$ be totally real number field of fundamental discriminant and degree $n\ge 3$. If  $ \left(\mathbb{Z}/n \mathbb{Z}\right)^\times$  is cyclic, then for any number field $L$ the following are equivalent:
\begin{enumerate}[(i)]
\item $K \cong L$.

\item $(O_K^{\bot},\tr_{K/\mathbb{Q}})  \cong (O_L^{\bot},\tr_{L/\mathbb{Q}})$.

\item ${\rm Sh}(K)={\rm Sh}(L)$ and $L$ is totally real with fundamental discriminant.
\end{enumerate}

If $\left(n,\d(K)\right)=1$, then the three items are also equivalent to (iv) $(O_K^{0},\tr_{K/\mathbb{Q}})  \cong (O_L^{0},\tr_{L/\mathbb{Q}})$.

\end{theorem}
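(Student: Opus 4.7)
The plan is to reduce Theorem \ref{Main} to Corollary \ref{DiscFundEquiv} together with a single additional rigidity input. Corollary \ref{DiscFundEquiv} already establishes, under exactly the hypotheses of the theorem (totally real $K$ with fundamental discriminant, $n\ge 3$, $(\mathbb{Z}/n\mathbb{Z})^{\times}$ cyclic), the mutual equivalence of $(O_K,\tr_{K/\mathbb{Q}})\cong(O_L,\tr_{L/\mathbb{Q}})$ with items (ii), (iii), and (under the extra coprimality assumption) (iv). Consequently, the entire statement collapses to proving the single equivalence
\[
K \cong L \;\Longleftrightarrow\; (O_K,\tr_{K/\mathbb{Q}})\cong(O_L,\tr_{L/\mathbb{Q}}).
\]

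The forward direction is immediate, since any field isomorphism transports the integral trace pairing to the integral trace pairing. For the converse, assume the integral trace forms are isometric. Positive definiteness of the trace form on $K$ propagates through the isometry, forcing $L$ to be totally real; and $\d(L)=\d(K)$ by Lemma \ref{DetsRelativos}, so $\d(L)$ is fundamental. Passing to the induced trace-zero isometry via \cite[Lemma 5.1]{Casimir} and applying Lemma \ref{TameYFundaIsSquafree} shows that the common discriminant is squarefree, hence $L$ is tame. At this point I would invoke the rigidity result from prior work on trace forms (see \cite{Man} and \cite{Casimir}) asserting that for totally real number fields of fundamental discriminant the isometry class of $(O_K,\tr_{K/\mathbb{Q}})$ is a complete field invariant. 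This yields $K\cong L$ and closes the chain.

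The main obstacle is precisely this last rigidity input: it is not formal from the lifting machinery of Section \ref{ElGeneral}, which only tells us how to promote an isometry of trace-zero (respectively shape) forms to an isometry of the full integral trace form. If that rigidity statement were not available in the exact generality needed, the natural route would be local-global in nature: use squarefreeness of $\d(K)$ to recover the étale $\mathbb{Z}_p$-algebra $O_K\otimes\mathbb{Z}_p$ at each prime $p$ from the local trace form, and then reconstruct $K$ globally from its collection of completions together with its signature. Granted that ingredient, everything else in Theorem \ref{Main} is a transparent repackaging of Corollary \ref{DiscFundEquiv}.
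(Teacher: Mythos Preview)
Your approach is essentially identical to the paper's one-line proof, which simply invokes Corollary \ref{DiscFundEquiv} together with \cite[Theorem 5.3]{Casimir} (exactly the rigidity input you allude to). The intermediate steps you sketch are already subsumed by that cited theorem and are in fact slightly off---your appeal to Lemma \ref{TameYFundaIsSquafree} presupposes that $L$ is tame rather than deducing it, and $\d(K)=\d(L)$ follows directly from the isometry of the full trace forms rather than from Lemma \ref{DetsRelativos}---but none of this affects the correctness of the overall reduction.
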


\begin{proof}
The result follows from Corollary \ref{DiscFundEquiv} and \cite[Theorem 5.3]{Casimir}.
\end{proof}

\subsection{Strategy's limitations}

The following examples try to illustrate the limitations of the strategy employed to prove Theorems \ref{MainThLift}, \ref{Main} and  test the sharpness of the statements. All the examples here have been found by conviniently looking at John Jones tables \cite{Jones}. The calculations of sizes of orthogonal groups have been carried out with \texttt{Magma}.\\

Suppose a number field $K$  is totally real with $\tr_{K/\mathbb{Q}}(O_K)=k\mathbb{Z}$, $k\in \mathbb{Z}^+$, then by \cite[Lemma 5.1]{Casimir} the restrictions maps \[\textrm{Aut}(O_K,\tr_{K/\mathbb{Q}}) \rightarrow \textrm{Aut}(O_K^{\bot},\tr_{K/\mathbb{Q}}) \textnormal{ and \,} \textrm{Aut}(O_K,\tr_{K/\mathbb{Q}}) \rightarrow \textrm{Aut}(O_K^{0},\tr_{K/\mathbb{Q}}) \]

\noindent are well defined homomorphism of groups. Moreover, this maps are injective if $\displaystyle{\frac{n}{k}}\nmid 2$. This is  because by \cite[Lemma 5.1]{Casimir} given $\varphi$ in either codomain, $\varphi^+$ and $\varphi^-$ are the only possible pre-images of $\varphi$ and they cannot both extend $\varphi$, otherwise, we would have  \[n \varphi^+(\gamma_K)-k= \varphi^+(n\gamma_K-k)=\varphi^-(n\gamma_K-k)=n \varphi^-(\gamma_K)+k,\] where $\gamma_K\in O_K \textnormal{ and } \tr_{K/\mathbb{Q}}(\gamma_K)=k$
so $-k \equiv k \mod n $ and $\displaystyle{\frac{n}{k}}\mid 2$. It follows that, when $\displaystyle{\frac{n}{k}}\nmid 2$, every automorphism of $(O_K^{\bot},\tr_{K/\mathbb{Q}})$ can be extended to all $O_K$ if and only if $\#\textrm{Aut}(O_K,\tr_{K/\mathbb{Q}})= \# \textrm{Aut}(O_K^{\bot},\tr_{K/\mathbb{Q}})$ and in general the restriction map will not be surjective if $\#\textrm{Aut}(O_K,\tr_{K/\mathbb{Q}})<\# \textrm{Aut}(O_K^{\bot},\tr_{K/\mathbb{Q}})$, similarly for $O_K^0$.

\begin{example}
Let $K$ be the cubic field with defining polynomial $x^3+x^2-8x+3$, then $K$ is totally real, $\d(K)=3\cdot 5^2\cdot19$, $\tr_{K/\mathbb{Q}}(O_K)=\mathbb{Z}$ and 
\[ \#\textrm{Aut}(O_K^{\bot},\tr_{K/\mathbb{Q}})= \#\textrm{Aut}(O_K,\tr_{K/\mathbb{Q}})=2 \neq 4=\#\textrm{Aut}(O_K^0,\tr_{K/\mathbb{Q}})\]
hence there exists an automorphism of $(O_K^0,\tr_{K/\mathbb{Q}})$ that \emph{cannot} be extended to all $O_K$, so the hypothesis $(n,\d(K))=1$ in Theorem $\ref{MainThLift}(ii)$ cannot be dropped. By contrast observe that all automorphisms of  $(O_K^\bot,\tr_{K/\mathbb{Q}})$ can be extended to  $O_K$.
\end{example}

\begin{example} Let $K$ be the field with defining polynomial $x^4-2x^3-5x^2+6x+1$, then
$K$ is totally real, $\d(K)=2^6\cdot5^2$,\, $\tr_{K/\mathbb{Q}}(O_K)=2 \,\mathbb{Z}$ and \[ \#\textrm{Aut}(O_K,\tr_{K/\mathbb{Q}})=8<16=\#\textrm{Aut}(O_K^{\bot},\tr_{K/\mathbb{Q}}).\]

Thus the restriction map is not surjective and therefore the hypothesis $\tr_{K/\mathbb{Q}}(O_K)=\mathbb{Z}$ in Theorem $\ref{MainThLift}(i)$ is not superfluous. \\
\end{example}

\begin{proposition}\label{MyM} Let $K$ be a $\mathbb{Z}/l\mathbb{Z}$-field with $l$ prime, then  for any number field $L$, every isometry $(O_K^\bot,\tr_{K/\mathbb{Q}}) \cong (O_L^\bot,\tr_{L/\mathbb{Q}})$ (resp.  $(O_K^0,\tr_{K/\mathbb{Q}}) \cong (O_L^0,\tr_{L/\mathbb{Q}})$) can be extended to one between the integral trace quadratic modules $(O_K,\tr_{K/\mathbb{Q}}) \cong (O_L,\tr_{L/\mathbb{Q}})$.
\end{proposition}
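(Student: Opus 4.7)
My plan is to split the argument according to the ramification of $l$ in $K$. The case $l=2$ can be dispatched first as a trivial base: $K$ is quadratic, the rank-one forms $(O_K^0,\tr_{K/\Q})$ and $(O_K^\bot,\tr_{K/\Q})$ each determine the squarefree radicand of $K$ (since the unique Gram entry is $2d$ for $K=\Q(\sqrt{d})$), so any isometry forces $K\cong L$ and extends trivially. Assume henceforth $l\ge 3$; since $l$ is prime, $(\Z/l\Z)^\times$ is cyclic, so the main hypothesis of Theorem \ref{MainThLift} holds automatically.

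If $l$ is unramified in $K$, then $K/\Q$ is tame at the only prime divisor of $n=l$, so Corollary \ref{TraceZ} yields $\tr_{K/\Q}(O_K)=\Z$, and unramifiedness also gives $(l,\d(K))=1$. Hence both parts of Theorem \ref{MainThLift} apply directly and produce the desired lift for either isometry.

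Otherwise $l$ is ramified in $K$, and being Galois of prime degree forces total and (because $e=l$ equals the residue characteristic) wild ramification, so Lemma \ref{TraceMau} gives $\tr_{K/\Q}(O_K)=l\Z$. For the $O^\bot$-isometry, I plan to mimic the mechanism of Theorem \ref{MainThLift}(i): choose a $\Z$-basis $\{1,\alpha_1,\ldots,\alpha_{l-1}\}$ of $O_K$, write $\varphi(\alpha_{i\bot})=l\beta_i-s_i$ with $\beta_i\in O_L$, $s_i\in\Z$, and derive the standard congruence $s_is_j\equiv t_it_j\pmod l$ from comparing $\tr_{L/\Q}(y_iy_j)$ and $\tr_{K/\Q}(\alpha_{i\bot}\alpha_{j\bot})$ modulo $l^2$. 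Since every $t_i=\tr_{K/\Q}(\alpha_i)$ lies in $l\Z$, this forces $s_i^2\equiv 0\pmod l$, hence $s_i\equiv 0\equiv t_i\pmod l$, and Lemma \ref{Lifting}(i) produces the extension $\varphi^+$.

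For the $O^0$-isometry, the crux is to show $\tr_{L/\Q}(O_L)=l\Z$; once that is established, $n/k=1$ makes Lemma \ref{Lifting}(ii)'s condition vacuous and the isometry extends. I intend to argue by contradiction: if $\tr(O_L)=\Z$, Lemma \ref{DetsRelativos} applied to both sides of the isometry forces $v_l(\d(K))=v_l(\d(L))+2$. The conductor-discriminant formula together with the wild ramification of the faithful characters of $\gal(K/\Q)$ yields $v_l(\d(K))\ge 2(l-1)$, while the assumption $\tr(O_L)=\Z$ excludes a totally ramified prime of $L$ over $l$ (by Lemma \ref{TraceMau}) and a short enumeration of the remaining tame decomposition types of $l$ in $L$ gives $v_l(\d(L))\le l-2$. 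These together force $2(l-1)\le l$, contradicting $l\ge 3$. The local discriminant estimate at $l$, requiring both the conductor-discriminant formula and the case analysis of decomposition types, is expected to be the main technical obstacle.
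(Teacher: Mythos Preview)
Your proposal is correct and follows the same overall skeleton as the paper's proof: split according to whether $l$ ramifies in $K$, invoke Theorem~\ref{MainThLift} in the unramified case, and in the ramified case show $\tr_{L/\Q}(O_L)=l\Z$ so that Lemma~\ref{Lifting}(ii) becomes vacuous. The differences are in the details, and they are worth noting.

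For the $O^\bot$-isometry when $l$ is wild in $K$, the paper first establishes $\tr_{L/\Q}(O_L)=l\Z$ (via $\d(K)=\d(L)$ and the converse part of Corollary~\ref{TraceZ} for prime degree), then observes $O_K^\bot=l\,O_K^0$ and $O_L^\bot=l\,O_L^0$ to reduce to the $O^0$ case. Your direct route through Lemma~\ref{Lifting}(i), exploiting $t_i\equiv 0\pmod l$ to force $s_i\equiv 0\pmod l$, is cleaner: it avoids both the reduction step and any preliminary statement about $\tr_{L/\Q}(O_L)$.

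For the $O^0$ contradiction, the paper bounds $v_l(\d(K))\ge l+1$ using that $\d(K)$ is a perfect square (as $K$ is a cyclic field of odd prime degree) together with the crude wild bound $v_l(\d(K))\ge l$, then argues that $v_l(\d(L))\ge l-1$ forces $g=f_1=1$ and hence $e_1=l$, contradicting tameness. Your bound $v_l(\d(K))\ge 2(l-1)$ via the conductor--discriminant formula is sharper for $l\ge 5$ and, combined with the tame estimate $v_l(\d(L))\le l-2$, gives the same contradiction in one numerical line. Either argument suffices; yours trades the parity observation for a bit of local class field theory.

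Finally, the paper absorbs $l=2$ into the general $O^0$ argument (via $L=\Q(\sqrt{\d(L)})=K$), whereas you dispose of it at the outset by reading off the squarefree radicand from the rank-one Gram entry. Both are fine.
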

\begin{proof}
In the case $l\nmid \d(K)$, this follows  directly from Theorem $ \ref{MainThLift}$ and Corollary $\ref{TraceZ}(i)$. On the other hand, if $l\mid \d(K)$, we have that $l$ ramifies in $K$ and the fact that $K$ is a  $\mathbb{Z}/l\mathbb{Z}$-field  forces to be only one prime $\mathfrak{p}$ in $K$ lying over $l$ and for that prime $l=e(\mathfrak{p}|l)$, so $(\ref{TraceMau})$ tells us $\tr_{K/\mathbb{Q}}(O_K)=l \, \mathbb{Z}$. We claim that if $(O_K^0,\tr_{K/\mathbb{Q}}) \cong (O_L^0,\tr_{L/\mathbb{Q}})$ this implies $\tr_{L/\mathbb{Q}}(O_L)=l \, \mathbb{Z}$.\\

Suppose not, then  $\tr_{L/\mathbb{Q}}(O_L)=\mathbb{Z}$ and by Lemma \ref{DetsRelativos} we would have $ \d(L)=l^{-2}\,\d(K)$. If $l=2$, then $L=\Q(\sqrt{\d(L)})=\Q(\sqrt{l^{-2}\d(K)})=K$ and since $\tr_{K/\mathbb{Q}}(O_K)=l \, \mathbb{Z}$ we would obtain a contradiction, so we may assume $l$ odd. Recall from  Corollary $\ref{TraceZ}(i)$ that $l\leq v_l(\d(K))$, as $l$ is odd and $K$ is a $\mathbb{Z}/l\mathbb{Z}$-field, we know that $\d(K)$ is perfect square, so in fact $l+1\leq v_l(\d(K))$, that is, $l-1\leq v_l(\d(L))$ but by Corollary $\ref{TraceZ}(ii)$ $l$ is tamely ramified in $L$,  thus if $f_1^{e_1} \ldots f_g^{e_g}  $ is its factorization type, the inequality \[l-1 \leq  v_l(\d(L))=l-(f_1+\cdots +f_g)\leq l-1\] shows $f_1=1$ and $g=1$ and this yields $l=e_1f_1=e_1$, a contradiction.\\

\noindent Hence if $\varphi:  (O_K^0,\tr_{K/\mathbb{Q}}) \xrightarrow{ \sim} (O_L^0,\tr_{L/\mathbb{Q}})$ is an isometry,  we have $\tr_{K/\mathbb{Q}}(O_K)=l \, \mathbb{Z}=\tr_{L/\mathbb{Q}}(O_L)$ and taking $\gamma_K=1$ in $(\ref{Lifting})(ii)$ we conclude that both $\varphi^+$ and $\varphi^-$ extend $\varphi$. Also if we are given an isometry $\varphi:  (O_K^\bot,\tr_{K/\mathbb{Q}}) \xrightarrow{ \sim} (O_L^\bot,\tr_{L/\mathbb{Q}})$, then $\d(K)=\d(L)$ and, as $l$ is prime,  Corollary $\ref{TraceZ}$ implies $\tr_{L/\mathbb{Q}}(O_L)=l \, \mathbb{Z}$, thus $(\ref{BasisZero})$ shows $O_K^\bot=l \,O_K^0$ and $O_L^\bot=l\,O_L^0$, therefore $\varphi(O_K^0)=O_L^0$ ($\varphi$ considered from $K^0$ to $L^0$) and we are back to the above case. \end{proof}

\begin{example} Let $K$ be the sextic field with defining polynomial $x^6-x^5-6x^4+6x^3+8x^2-8x+1$, then $K$ is a totally real $\mathbb{Z}/6\mathbb{Z}$-field,
$\d(K)=3^3\cdot7^5$,\, $\tr_{K/\mathbb{Q}}(O_K)=\mathbb{Z}$ and 
\[ \#\textrm{Aut}(O_K^\bot,\tr_{K/\mathbb{Q}})= \#\textrm{Aut}(O_K,\tr_{K/\mathbb{Q}})=96 \neq 1440=\#\textrm{Aut}(O_K^0,\tr_{K/\mathbb{Q}})\]

\end{example}

As a side note, we did not find any example showing that $\left(\mathbb{Z}/n \mathbb{Z}\right)^\times$ being cyclic is really a necessary hypothesis, so there might be some place for improvement there. In particular, it would be interesting to answer the following question.

\begin{question}\label{Question8}
Do there exist  totally real octic fields with odd discriminant $K,L$, and an isometry $ (\mathcal{O}_K^{0},\textrm{tr}_{K/\mathbb{Q}})\cong (\mathcal{O}_L^{0},\textrm{tr}_{L/\mathbb{Q}})$ which cannot be lifted to an isometry $(\mathcal{O}_K,\textrm{tr}_{K/\mathbb{Q}})\cong (\mathcal{O}_L,\textrm{tr}_{L/\mathbb{Q}})$?
\end{question}

\section{Casimir Invariants}\label{CasimirInv}

In this section we recall the definition of Casimir invariants and state some of the useful facts about them. For a detailed treatment see \cite{Casimir}.\\

Let $V$ be a finite dimensional vector space over a field $F$ and let $V^{*}:={\rm Hom}_F(V,F)$ be its dual. Let $\Gamma: V \rightarrow V^{*}$ be an isomorphism, for instance the one induced by a nondegenerate bilinear form $B:V\times V\rightarrow F$.  Now let $R$ be an $F$-algebra and let $\phi, \psi \in {\rm Hom}_{F}(V,R)$. The map \[V^{*} \times V\rightarrow R,\; (f,v) \mapsto (\psi\circ\Gamma^{-1})(f)\cdot \phi(v)\]
is bilinear and $F$-balanced (as $R$ is an $F$-algebra), hence it lifts to a morphism $V^{*} \otimes_F V \rightarrow R$. Identifying $V^{*} \otimes_F V$ with $\text{End}_{F}(V)$ we obtain a linear map
 \[\rho_{\Gamma,\psi,\phi}: \text{End}_{F}(V) \rightarrow R.\] 

\begin{definition}
Let $\psi, \phi \in {\rm Hom}_{F}(V,R)$. The $\Gamma$-Casimir element of $\psi$ and $\phi$  is the element $c_{\Gamma}(\psi, \phi) \in R$ given by the image under $\rho_{\Gamma,\psi,\phi}$ of the identity morphism; \[c_{\Gamma}(\psi, \phi) := \rho_{\Gamma,\psi,\phi}(1).\]  
\end{definition}

\begin{definition}
Let $F$ be a field, $V$ be a finite dimension $F$-space, $\Gamma :V \to V^{*}$ be an isomorphism and $R$ be an $F$-algebra. The \textbf{Casimir pairing} associated to $\Gamma$ is the  map
\[\langle \cdot , \cdot\rangle_{\Gamma} :{\rm  Hom}_{F}(V,R) \times {\rm Hom}_{F}(V,R) \rightarrow R,\; (\psi,\phi) \mapsto c_{\Gamma}(\psi, \phi).\]
\end{definition}

Whenever the isomorphism $\Gamma$ is induced by a nondegenerate bilinear form $B$ we denote by $\langle \cdot , \cdot\rangle_{B}$ the Casimir pairing associated to $\Gamma$.\\

If $K/F$ is a finite separable field extension the trace pairing \[\tr_{K/F}: K\times K \to F; \ \   ( x, y ) \mapsto \tr_{K/F}(xy)\] is a nondegenerate bilinear form. For any $F$-algebra $R$ we denote by  $\langle \cdot, \cdot\rangle_{\tr_{K/F}}$ the Casimir pairing on ${\rm Hom}_{F}(K,R)$ associated to the trace pairing ${\rm tr}_{K/F}$.

\begin{theorem}\cite[Corollary  2.9]{Casimir}
Let $F$ be a field and let $K/F$ and $L/F$ be  separable field extensions of the same degree $n$. Suppose that $\Omega/F$ is field extension containing a Galois closure of $KL/F$. Let $\phi: K \rightarrow L$ be an $F$-linear map. Then the following are equivalent
\begin{enumerate}[(i)]
\item The map $\phi$ is an isometry between $(K,{\rm tr}_{K/F})$ and $(L,{\rm tr}_{L/F})$.
\item The map $\Phi$, composition by $\phi$, is an isometry between the spaces ${\rm Hom}_F(L,\Omega)$  and ${\rm Hom}_F(K,\Omega)$ endowed with their Casimir pairings $\langle \cdot ,\cdot \rangle_{{\rm tr}_{L/F}}$ and $\langle \cdot ,\cdot \rangle_{{\rm tr}_{K/F}}$ 
\item The matrix $U=\left(c_{ij} \right)$ is orthogonal, where $c_{ij}:=\langle \sigma_i, \tau_j\phi \rangle_{{\rm tr}_{K/F}}$, and  $\{\sigma_i\}$, $\{\tau_i\}$ are the sets of $F$-embeddings of $K$ and $L$ into $\Omega$.
\end{enumerate}
\end{theorem}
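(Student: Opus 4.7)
The plan is to build on Corollary \ref{DiscFundEquiv} (which I may assume, as it precedes the theorem statement) and then close a single remaining gap. That corollary, under the same hypotheses on $K$, already establishes the equivalence among the integral trace isometry $(O_K,\tr_{K/\mathbb{Q}})\cong(O_L,\tr_{L/\mathbb{Q}})$, item (ii), item (iii), and, when $(n,\d(K))=1$, item (iv). Consequently, the only new content in Theorem \ref{Main} is the replacement of the integral trace isometry by the field isomorphism $K\cong L$. So I plan to prove a single extra equivalence: $(O_K,\tr_{K/\mathbb{Q}})\cong(O_L,\tr_{L/\mathbb{Q}}) \iff K\cong L$, and then chain everything together.

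One direction is immediate and I would dispatch it first: a field isomorphism $K\xrightarrow{\sim}L$ restricts to a $\mathbb{Z}$-algebra isomorphism $O_K\xrightarrow{\sim}O_L$, and since the trace of an element depends only on its characteristic polynomial over $\mathbb{Q}$, the restriction is automatically an isometry of the integral trace forms. This handles (i) $\Rightarrow$ the integral trace isometry, and, combined with Corollary \ref{DiscFundEquiv}, it yields (i) $\Rightarrow$ (ii), (iii), and (iv) when applicable.

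The nontrivial converse — integral trace isometry implies $K\cong L$ — I would obtain by invoking \cite[Theorem 5.3]{Casimir}, which (via the Casimir invariants machinery recalled in Section \ref{CasimirInv}) states that for totally real number fields with fundamental discriminant the integral trace form is a complete invariant of the field. To apply this cleanly I need $L$ itself to satisfy those hypotheses. This is supplied automatically: starting from any one of (ii), (iii), or (iv), Corollary \ref{DiscFundEquiv} forces $(O_K,\tr_{K/\mathbb{Q}})\cong(O_L,\tr_{L/\mathbb{Q}})$, and the equivalence with (iii) there pins down $L$ as a totally real number field with $\d(L)=\d(K)$ fundamental. So \cite[Theorem 5.3]{Casimir} applies to both $K$ and $L$ and delivers a field isomorphism $K\cong L$, closing the loop.

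The main obstacle is not something I would have to construct here — the delicate reconstruction of the field from the integral trace form is precisely what \cite[Theorem 5.3]{Casimir} accomplishes. The bookkeeping I do need to verify is that the hypotheses of $K$ (totally real, fundamental discriminant) genuinely propagate to $L$ through Corollary \ref{DiscFundEquiv}; this is where the totally-real part of (iii), together with Lemma \ref{TameYFundaIsSquafree} in the background controlling the discriminants, does its work. Once this is checked, assembling the chain (i) $\Rightarrow$ integral trace isometry $\Leftrightarrow$ (ii) $\Leftrightarrow$ (iii) $[\Leftrightarrow$ (iv)$]$ $\Rightarrow$ (i) completes the proof.
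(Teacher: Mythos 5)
Your proposal does not address the statement you were asked to prove. The quoted theorem is the general Casimir-pairing criterion (\cite[Corollary 2.9]{Casimir}) recalled in \S\ref{CasimirInv}: for separable extensions $K/F$ and $L/F$ of an \emph{arbitrary} field $F$ and an $F$-linear map $\phi\colon K\to L$, it asserts the equivalence of (i) $\phi$ being an isometry of the trace pairings, (ii) the pullback $\Phi$ being an isometry between $\mathrm{Hom}_F(L,\Omega)$ and $\mathrm{Hom}_F(K,\Omega)$ for their Casimir pairings, and (iii) orthogonality of the matrix $U=(c_{ij})$ with $c_{ij}=\langle\sigma_i,\tau_j\phi\rangle_{\tr_{K/F}}$. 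What you have written is instead a proof of Theorem \ref{Main} (totally real number fields of fundamental discriminant, shapes, and trace-zero forms): its items (i)--(iv), the appeals to Corollary \ref{DiscFundEquiv}, to \cite[Theorem 5.3]{Casimir}, and to Lemma \ref{TameYFundaIsSquafree} have no bearing on the statement above, which involves no integrality, no discriminant or total-reality hypotheses, and no number fields at all. This is a complete mismatch rather than a repairable gap.

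To prove the actual statement you would need to unwind the definition of the Casimir element: for a basis $\{e_i\}$ of $K$ with dual basis $\{e_i^*\}$ relative to $\tr_{K/F}$ one has $\langle\psi,\chi\rangle_{\tr_{K/F}}=\sum_i\psi(e_i^*)\chi(e_i)$, and the three conditions are compared by expressing the Gram matrices of the trace forms through the embedding matrices $\left(\sigma_j(e_i)\right)$ and $\left(\tau_j(\phi(e_i))\right)$ and identifying the resulting change-of-basis matrix with $U$. For what it is worth, the paper itself offers no proof of this statement; it is quoted verbatim from \cite[Corollary 2.9]{Casimir}. Your argument, had the target been Theorem \ref{Main}, would essentially reproduce the paper's one-line proof of that result.
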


\section{A proof via Bhargava's parametrization of quartic rings}\label{QuarticParametrization}

The aim of this section is to prove  \cite[Conjecture 2.10]{Man2} using Bhargava's parametrization of quartic rings. Even though the veracity of the conjecture follows from  Theorem \ref{Main} we add the proof coming from Bhargava's parametrization since it generalizes the ideas involved in the proof of cubic fields, thus showing the close relation existing between parametrization of rings and trace-zero forms on them. A similar, although quite more simple, argument can be carried on for quadratic fields given that there is also a  parametrization in degree $2$. For the convenience of the reader we recall here the statement of the conjecture

\begin{conje}\cite[Conjecture 2.10]{Man2}
Let $K$ be a totally real quartic number field with fundamental discriminant. If $L$
is a tamely ramified number field such that an isomorphism of quadratic modules
\[(O_{K}^{0}, \tr_{K/\Q}) \cong (O_{L}^{0}, \tr_{L/\Q})\] exists, then $K \cong L$.
\end{conje}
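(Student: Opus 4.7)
The plan is to mirror the cubic-case argument of \cite[Theorem 6.5]{Man}, replacing the Delone--Faddeev parametrization of cubic rings by Bhargava's parametrization of quartic rings by pairs of integral ternary quadratic forms. I would begin with the preliminary reductions. The existence of an isometry between the trace zero modules forces $[L:\Q]=4$. By Lemma \ref{TameYFundaIsSquafree} one obtains $\d(K)=\d(L)$ with both square-free, and by Corollary \ref{TraceZ} the trace maps from $O_K$ and $O_L$ each surject onto $\Z$. A signature argument -- the trace pairing on $O_K^0$ is positive definite because $K$ is totally real, so its isometric image on $O_L^0$ is too -- pins down $L$ as a totally real quartic field. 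This reduces the problem to comparing two maximal totally real quartic orders of common square-free discriminant and with surjective trace.

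Next I would invoke Bhargava's parametrization to attach to $O_K$ and $O_L$ their Bhargava pairs $(A_K, B_K), (A_L, B_L) \in \Z^2 \otimes \mathrm{Sym}^2 \Z^3$, each well defined up to $\mathrm{GL}_3(\Z) \times \mathrm{GL}_2(\Z)$, with resolvent binary cubic form $4\det(xA - yB)$ recording the resolvent cubic ring. The central step -- and expected main obstacle -- is to express the Gram matrix of $(O_K^0, \tr_{K/\Q})$ explicitly in terms of $(A_K, B_K)$. Taking a basis of $O_K$ adapted simultaneously to Bhargava's normal form and to Proposition \ref{BasisZero}, the multiplication table of $O_K$ is encoded by $A_K$ and $B_K$; computing $\tr_{K/\Q}(\alpha_i \alpha_j)$ then produces an explicit ternary quadratic form, built from $(A_K, B_K)$ together with the trace vector $(t_1, t_2, t_3)$, which represents the trace zero form. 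This is the quartic analogue of the identification, in the cubic case, of the trace zero form with the Hessian of the Delone--Faddeev cubic; the computation is routine in principle but delicate because Bhargava's natural bases are not in general bases of trace zero.

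The final step is to turn the isometry hypothesis into an isomorphism of pairs. The isometry of trace zero forms gives a $\mathrm{GL}_3(\Z)$-equivalence between the two ternary quadratic forms computed in the previous step; the hardest part of the proof will be to promote this to a full $\mathrm{GL}_3(\Z) \times \mathrm{GL}_2(\Z)$-equivalence of the Bhargava pairs, i.e., to show that the trace zero form pins down not only $A$ but also $B$ (and hence the resolvent cubic) up to the relevant orbit equivalence. Here the hypotheses of total reality, fundamental (hence square-free) discriminant, and maximality are all essential: they restrict the residual $\mathrm{GL}_2(\Z)$-freedom on the resolvent side enough that the extension is forced, playing the rigidity role analogous to that of Bhargava's higher composition laws on cubes in the cubic proof, where equivalence of Hessians forces equivalence of the binary cubics. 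Once the two Bhargava pairs are shown to be equivalent one concludes $O_K \cong O_L$ and hence $K \cong L$.
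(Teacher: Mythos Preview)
Your preliminary reductions are sound and match the paper: Lemma \ref{TameYFundaIsSquafree} and Corollary \ref{TraceZ} reduce to two totally real maximal quartic orders of the same square-free discriminant with surjective trace, and positive-definiteness of the trace-zero form does force $L$ to be totally real. The overall plan---read off a trace-form covariant of the Bhargava pair $(A,B)$ and show it determines the $G_\Z$-orbit---is also the paper's plan. But the decisive step you flag as ``the hardest part'' is left as a black box, and the paper's mechanism is not the one you suggest.

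First, the paper does \emph{not} work with $(O_K^0,\tr)$ directly: the $\mathrm{SL}_3$-covariant $\mathcal{Q}_{(A,B)}$ that is actually available (computed by Bhargava and Harron) is the Gram form of $(O_K^\bot,\tfrac14\tr)$, not of $O_K^0$. The paper therefore first invokes Corollary \ref{DiscFundEquiv} to transfer the isometry hypothesis from $O_K^0$ to $O_K^\bot$. Your plan to build a covariant for $O_K^0$ via Proposition \ref{BasisZero} is extra work the paper avoids.

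Second---and this is the substantive gap---your phrasing ``pins down not only $A$ but also $B$'' mis-frames the problem: $\mathcal{Q}_{(A,B)}$ is a single degree-$4$ covariant mixing both $A$ and $B$, and one must show it determines the full $G_\Z$-orbit. The paper does not attack this invariant-theoretically. Instead it sends $(A,B)\mapsto(2A,2B)$ into the space $\Z^2\otimes\mathrm{Sym}^2\Z^3$, whose nondegenerate orbits parametrize triples $(R,I,\delta)$ of a cubic ring, an ideal, and a scaling, via Bhargava's symmetric-cube law (Theorem \ref{CorrsTrip}). The key Lemma \ref{DiagConm} is a direct computation showing that $\mathcal{Q}_{(A,B)}$ coincides with the form $T(R,I,\delta)=\bigl(I,\tr(x^2/\delta)\bigr)$. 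Injectivity of $T$ on the relevant locus (Theorem \ref{InjectivityTheorem}) is then proved using the Casimir pairings of \S\ref{CasimirInv}: one shows the normalized Casimir elements $c_{ij}$ are algebraic integers with all conjugates bounded by $1$, forcing the orthogonal comparison matrix to be a signed permutation, which either contradicts $K\not\cong L$ or exhibits the $\kappa$ witnessing equivalence of triples. None of these three ingredients---the passage to the triple space, the commutative diagram, or the Casimir integrality-plus-boundedness argument---appears in your outline; the vague appeal to ``residual $\mathrm{GL}_2(\Z)$-freedom'' does not supply them.

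Finally, note that the paper also records a second, self-contained proof via Theorem \ref{Main}: since $(\Z/4\Z)^\times$ is cyclic and the square-free discriminant is coprime to $4$, Theorem \ref{MainThLift}(ii) lifts the isometry of trace-zero modules to one of $(O_K,\tr)\cong(O_L,\tr)$, and \cite[Theorem 5.3]{Casimir} finishes. That route bypasses Bhargava's parametrization entirely.
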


Thanks to Lemma \ref{TameYFundaIsSquafree} we may assume in the conjecture that $K$ and $L$ have square free discriminant.

\subsection{Parametrization of quartic rings}

Let $(\mathbb{Z}^2\otimes \textnormal{\textrm{Sym}}^2 \mathbb{Z}^3)^*$ denote the set of pairs $(A,B)$ of \textit{integral} ternary quadratic forms. We can write a pair $(A,B)\in (\mathbb{Z}^2\otimes \textnormal{\textrm{Sym}}^2 \mathbb{Z}^3)^*$ as
$$2\cdot(A,B)=\left(\begin{bmatrix}
    2a_{11}& a_{12} & a_{13}\\
    a_{12}& 2a_{22} & a_{23} \\
    a_{13}& a_{23} & 2a_{33} 
\end{bmatrix},\begin{bmatrix}
    2b_{11}& b_{12} & b_{13}\\
    b_{12} & 2b_{22} & b_{23} \\
    b_{13} & b_{23} & 2b_{33} 
\end{bmatrix}\right)$$
where $a_{ij},b_{ij} \in \mathbb{Z}$. The group $G_{\Z}:=\textrm{GL}_2(\mathbb{Z}) \times  \textrm{SL}_3(\mathbb{Z})$ acts naturally on  $(\mathbb{Z}^2\otimes \textnormal{\textrm{Sym}}^2 \mathbb{Z}^3)^*$. Namely, if $g=(g_2,g_3)\in \textrm{GL}_2(\mathbb{Z}) \times  \textrm{SL}_3(\mathbb{Z})$ with $g_2=\begin{pmatrix}
r & s\\[-1pt]
t & u
\end{pmatrix}$, then $g$ acts on $(A,B)$ by

\begin{equation}\label{action}
g\cdot(A,B)=\left(r\cdot g_3Ag_3^t+s\cdot g_3Bg_3^t,t\cdot g_3Ag_3^t+u\cdot g_3Bg_3^t\right)
\end{equation}

This action has a fundamental invariant called the \textbf{discriminant} and is given by $$\d((A,B))=\d(f_{(A,B)}(x,y))=b^2c^2-27a^2d^2+18abcd-4ac^3-4b^3d$$
where $f_{(A,B)}(x,y)=4\cdot\textrm{det}(Ax-By)=ax^3+bx^2y+cxy^2+dy^3$ is the \textbf{cubic resolvent form} of $(A,B)$, a covariant for the action of $\textrm{GL}_2(\mathbb{Z})$.\\

 In \cite{bhargavaIII} Bhargava proved that we can parametrize quartic rings using integral ternary quadratic forms, his main result is the following.

\begin{theorem}\label{Corres}
There is a bijection between the set of $G_{\Z}$-orbits on the space  $(\mathbb{Z}^2\otimes \textnormal{\textrm{Sym}}^2 \mathbb{Z}^3)^*$ and isomorphism classes of pairs $(Q,R)$, where $Q$ is a quartic ring and $R$ is a cubic resolvent of $Q$. Moreover, this correspondence is discriminant preserving $\textnormal{\d}((A,B))=\textnormal{\d}(Q)=\textnormal{\d}(R)$.
\end{theorem}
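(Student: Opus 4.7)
The plan is to exhibit mutually inverse, $G_{\Z}$-equivariant, and discriminant-preserving maps in both directions and then pass to equivalence classes. Concretely, from a pair $(A,B)$ I want to produce a free rank-$4$ $\Z$-module $Q$ with an explicit multiplication, together with a rank-$3$ resolvent $R$ whose multiplication is governed by the binary cubic $f_{(A,B)}(x,y)=4\det(Ax-By)$; conversely, from a pair $(Q,R)$ I want to extract a representative of a $G_{\Z}$-orbit on $(\Z^2\otimes\textrm{Sym}^2\Z^3)^*$.

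For the forward direction, the cubic resolvent $R$ is obtained by feeding $f_{(A,B)}$ into the Delone--Faddeev--Gross bijection between $\textrm{GL}_2(\Z)$-orbits of integral binary cubic forms and isomorphism classes of cubic rings. For $Q$ I would set $Q=\Z\cdot 1\oplus \Z\alpha_1\oplus\Z\alpha_2\oplus\Z\alpha_3$ and prescribe structure constants $\alpha_i\alpha_j=c_{ij}^{0}+\sum_{k=1}^{3}c_{ij}^{k}\alpha_k$ by explicit polynomials $c_{ij}^{k}\in\Z[a_{ij},b_{ij}]$. These polynomials are forced (up to the $G_{\Z}$-action) by demanding that over $\Q$ the $\Q$-algebra $Q\otimes\Q$ coincide with the \'etale algebra cut out by the zero-dimensional subscheme $\{A=B=0\}\subset\mathbb{P}^{2}$, which has degree $4$, and that its Galois-theoretic cubic resolvent be $R\otimes\Q$. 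The key point to verify is that the resulting $c_{ij}^{k}$ satisfy associativity, which amounts to a universal polynomial identity in $\Z[a_{ij},b_{ij}]$; I would check this either by direct (computer-assisted) expansion, or more conceptually by invoking Zariski density of the locus where $Q\otimes\Q$ is \'etale---on which associativity is automatic---and extending from there.

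For the reverse direction, given $(Q,R)$, I would pick $\Z$-bases $\{1,\alpha_1,\alpha_2,\alpha_3\}$ for $Q$ and $\{1,\beta_1,\beta_2\}$ for $R$, and use the canonical resolvent map attached to the pair $(Q,R)$ to extract, from the multiplication $\alpha_i\alpha_j$ reduced modulo $\Z$, an $R/\Z$-valued symmetric form on $Q/\Z$. Writing this form in the $\beta$-basis yields integers $a_{ij},b_{ij}$, hence a pair $(A,B)\in(\Z^2\otimes\textrm{Sym}^2\Z^3)^*$, well-defined up to the $\textrm{SL}_3(\Z)$-action changing the $\alpha$-basis and the $\textrm{GL}_2(\Z)$-action changing the $\beta$-basis---precisely the $G_{\Z}$-action of \eqref{action}. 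One then checks inverseness by comparing explicit formulas in the universal ring $\Z[a_{ij},b_{ij}]$, and the discriminant equality $\d((A,B))=\d(Q)=\d(R)$ follows by computing the trace-pairing Gram determinant of $Q$ once, in the universal case.

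The main obstacle is the pair of interlocking issues: checking associativity of the explicit quartic multiplication (a laborious, though mechanical, universal polynomial identity), and---more subtly---formalizing the notion of cubic resolvent so that the bijection is with pairs $(Q,R)$ rather than with quartic rings $Q$ alone. This latter point is essential, because non-maximal quartic rings can admit several non-isomorphic resolvents; indexing orbits by pairs $(Q,R)$, and showing that once $R$ is fixed the orbit of $(A,B)$ is rigidly determined and conversely, is what makes the bijection hold.
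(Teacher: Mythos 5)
The paper does not prove this statement at all: Theorem \ref{Corres} is quoted verbatim from Bhargava's \emph{Higher composition laws III} and used as a black box, so there is no internal proof to compare yours against. That said, your sketch is a fair reconstruction of Bhargava's actual strategy: structure constants $c_{ij}^k\in\Z[a_{ij},b_{ij}]$ for the quartic multiplication, the cubic resolvent ring obtained by feeding $f_{(A,B)}$ into Delone--Faddeev--Gross, associativity checked as a universal polynomial identity (Bhargava does argue by reduction to the generic/\'etale locus, as you propose), and the reverse map extracted from the resolvent data, with the basis ambiguities accounting exactly for the $G_{\Z}$-action. You also correctly identify the essential subtlety that the bijection must be with pairs $(Q,R)$ because non-maximal quartic rings admit several resolvents. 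Two points where your outline elides real content: first, the rational condition that $Q\otimes\Q$ be the coordinate ring of $\{A=B=0\}$ only determines the $\Q$-algebra, not the integral structure constants; in Bhargava's argument these are pinned down (uniquely up to translations $\alpha_i\mapsto\alpha_i+m_i$, not up to the full $G_{\Z}$-action) by explicit integrality conditions coming from the resolvent map, and exhibiting them as integral polynomials is itself part of the theorem, especially for degenerate pairs where the subscheme $\{A=B=0\}$ is not zero-dimensional. Second, ``checking inverseness in the universal ring'' understates the work needed for surjectivity (every pair $(Q,R)$ arises from some $(A,B)$) and injectivity (isomorphic pairs lie in one orbit), which occupies most of Bhargava's paper. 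As a proof plan your proposal is sound; as a proof it defers precisely the parts that make the theorem hard.
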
 

A cubic resolvent of a quartic ring $Q$ is a cubic ring $R$ such  equipped with a
 quadratic resolvent mapping $Q/\Z\rightarrow R/\Z$ satisfying certain formal properties whose precise definition can be found in \cite[Section 3.9]{bhargavaIII}. When $Q$ is the maximal order in a $S_4$-field $K$ the ring $R$ is unique. It is the cubic ring corresponding to $f_{(A,B)}$ by the Delone-Faddeev-Gross parametrization of cubic rings, an order in the usual cubic resolvent field of $K$.\\

The space $(\mathbb{Z}^2\otimes \textnormal{\textrm{Sym}}^2 \mathbb{Z}^3)^*$ has a  $\textrm{SL}_3$-covariant  of degree $4$. Namely, let $(A,B) \in (\mathbb{Z}^2\otimes\textnormal{\textrm{Sym}}^2 \mathbb{Z}^3)^*$ and suppose $Q$ is the quartic ring corresponding to $(A,B)$ by Theorem \ref{Corres}, then the covariant denoted $\mathcal{Q}_{(A,B)}$ is the integral ternary quadratic form obtained by restricting the trace form $\frac{1}{4}\tr(x^2)$  to  $\{x\in \mathbb{Z}+4Q: \tr(x)=0\}$. For example, if $Q_{(A,B)}=O_K$ is the maximal order in a quartic field $K$, then $\mathcal{Q}_{(A,B)}$  corresponds to the isometry class of the quadratic $\mathbb{Z}$-module $(O_K^{\bot},\frac{1}{4}\tr_{K/\mathbb{Q}})$. The explicit computation of $\mathcal{Q}_(A,B)$ in terms of the coefficients of $(A,B)$ is in  Section $2.3.2$ of \cite{piperH} and also in the  appendix  to Chapter $5$ of \cite{bhargavaT}. \\

Thus if $(\textnormal{\textrm{Sym}}^2\mathbb{Z}^3)^*$ denotes the set of integral ternary quadratic forms and \[(\textnormal{\textrm{Sym}}^2\mathbb{Z}^3)^*/{\rm SL}_3(\Z)\] its set of orbits by the action of $\textrm{SL}_3(\mathbb{Z})$, this gives us a map
$$\mathcal{Q}: (\mathbb{Z}^2\otimes \textnormal{\textrm{Sym}}^2 \mathbb{Z}^3)^*/G_{\Z} \longrightarrow (\textnormal{\textrm{Sym}}^2\mathbb{Z}^3)^*/{\rm SL}_3(\Z) .$$
Thanks to Corollary $\ref{DiscFundEquiv}$, the proof of \cite[Conjecture 2.10]{Man2} amounts to proving that $\mathcal{Q}$ is injective when restricted to the orbits of pairs $(A,B)$ coming from totally real quartic fields with square free discriminant.

\subsection{Parametrization of order two ideals in cubic rings}
There is another arithmetic object that is parametrized by pairs of ternary quadratic forms. Let  $\mathbb{Z}^2\otimes \textnormal{\textrm{Sym}}^2 \mathbb{Z}^3$  be the set of pairs $(A,B)$ of symmetric $3\times3$ integer matrices. Again the group $G_{\Z}=\textrm{GL}_2(\mathbb{Z}) \times  \textrm{SL}_3(\mathbb{Z})$ acts naturally on this set as described in the equation (\ref{action}), the only difference is that in the space $\mathbb{Z}^2\otimes \textnormal{\textrm{Sym}}^2 \mathbb{Z}^3$ the \textbf{cubic resolvent form} is now
$$F_{(A,B)}=\textrm{det}(Ax-By)$$
(without the $4$ factor) and the \textbf{discriminant} is defined as $\d((A,B))=\d(F_{(A,B)})$. The following theorem, obtained by Bhargava in \cite{bhargavaII} imposing symmetry on a more general result about $3\times 3\times 2$ boxes of integers (a higher dimensional  analog of Bhargava's cubes), shows how the orbits in this space parametrize order two ideals in cubic rings. 
\begin{theorem}\label{CorrsTrip}
There is a bijection between the set of nondegenerate $G_{\Z}$-orbits on the space $\mathbb{Z}^2\otimes \textnormal{\textrm{Sym}}^2 \mathbb{Z}^3$ and the set of equivalence classes of triples $(R,I,\delta)$, where $R$ is a nondegenerate cubic ring, $I$ is an ideal of $R$, and $\delta$ is an invertible element of $R\otimes \mathbb{Q}$  such that $I^2\subset (\delta)$ and $N(\delta)=N(I)^2$. (Here two triples $(R,I,\delta)$ and $(R',I',\delta')$ are equivalent if there exists an isomorphism $\phi: R \rightarrow R'$ and an element $\kappa\in R'\otimes \mathbb{Q}$ such that $I'=\kappa \phi(I)$ and $\delta'=\kappa^2 \phi(\delta)$). Under this bijection, $\d((A,B))=\d(R)$.
\end{theorem}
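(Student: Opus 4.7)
The plan is to deduce the theorem from a parametrization of non-symmetric $3 \times 3 \times 2$ integer boxes, and then to impose symmetry. One first establishes that $\textrm{GL}_2(\Z) \times \textrm{SL}_3(\Z) \times \textrm{SL}_3(\Z)$-orbits on $\Z^2 \otimes \Z^3 \otimes \Z^3$ are in bijection with equivalence classes of quadruples $(R, I_1, I_2, \delta)$, where $R$ is a nondegenerate cubic ring, the $I_j$ are rank-three $R$-ideals, and $\delta \in (R \otimes \Q)^\times$ satisfies $I_1 I_2 \subseteq (\delta)$ and $N(I_1) N(I_2) = N(\delta)$. Requiring the box to be invariant under the involution swapping the two $\Z^3$ factors then forces $I_1 = I_2 =: I$, cuts the acting group down to $\textrm{GL}_2(\Z) \times \textrm{SL}_3(\Z)$ (the second $\textrm{SL}_3(\Z)$ collapsing to the diagonal), and restricts the space to $\Z^2 \otimes \textnormal{Sym}^2 \Z^3$, producing the stated bijection.

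For the explicit forward map, given $(A,B)$, form the binary cubic $F_{(A,B)}(x,y) = \det(Ax - By)$; Delone--Faddeev--Gross assigns to it a nondegenerate cubic ring $R = \Z\langle 1, \omega, \theta\rangle$ with multiplication determined by the coefficients of $F_{(A,B)}$. Viewing $(A,B)$ as a symmetric $\Z^2$-valued pairing on $\Z^3$ and identifying the target $\Z^2$ with $\Z\omega + \Z\theta \subset R$ yields a symmetric $R$-valued pairing on $\Z^3$; this promotes $\Z^3$ to a rank-three lattice $I \subset R \otimes \Q$ whose multiplication table is encoded by a unique $\delta \in (R \otimes \Q)^\times$ for which $I^2 \subseteq (\delta)$ and $N(\delta) = N(I)^2$ are both verifiable from explicit polynomial identities in the $a_{ij}, b_{ij}$. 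For the inverse map, given $(R, I, \delta)$, fix $\Z$-bases $\{1, \omega, \theta\}$ of $R$ and $\{\alpha_1, \alpha_2, \alpha_3\}$ of $I$; the hypothesis $I^2 \subseteq (\delta)$ says $\alpha_i \alpha_j/\delta \in R$, so one may write
\[
\frac{\alpha_i \alpha_j}{\delta} = c_{ij} + a_{ij}\omega + b_{ij}\theta, \qquad a_{ij}, b_{ij}, c_{ij} \in \Z,
\]
and set $A := (a_{ij})$, $B := (b_{ij})$. Commutativity of multiplication in $R \otimes \Q$ makes $A$ and $B$ symmetric. A change of $\Z$-basis of $I$ by $g_3 \in \textrm{SL}_3(\Z)$ conjugates $A, B$ by $g_3$, a change of $(\omega, \theta)$ by $g_2 \in \textrm{GL}_2(\Z)$ mixes $A$ and $B$ as prescribed in \eqref{action}, and the rescaling $(\alpha_i, \delta) \mapsto (\kappa\alpha_i, \kappa^2 \delta)$ leaves $(A, B)$ unchanged; this matches the two equivalence relations. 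The identity $\d((A,B)) = \d(R)$ drops out once one checks that $F_{(A,B)}$ is precisely the resolvent cubic attached to $R$ by Delone--Faddeev--Gross.

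The main obstacle is showing that the two constructions are mutually inverse, and in particular recovering $\delta$ canonically (up to $\kappa^2$) from an arbitrary symmetric box $(A, B)$. This is nontrivial because $I$ need not be invertible over $R$: when $I$ is projective the element $\delta$ is essentially determined by $I$, but in general $\delta$ carries genuinely extra information. The technical heart of the argument in \cite{bhargavaII} is to translate the associativity and commutativity axioms of $R$, together with the $R$-module structure of $I$ and the norm condition, into a system of polynomial identities on the nine coefficient vectors of the box, and then to verify that these identities are satisfied automatically for any symmetric $(A, B)$; once this is done, the surjectivity and injectivity of the orbit map follow by uniqueness of the multiplication constants.
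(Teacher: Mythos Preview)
The paper does not supply its own proof of this theorem; it simply attributes the result to Bhargava \cite{bhargavaII}, noting that it is obtained ``imposing symmetry on a more general result about $3\times 3\times 2$ boxes of integers.'' Your sketch follows exactly that route---deduce the parametrization of quadruples $(R,I_1,I_2,\delta)$ from the non-symmetric box space and then specialize to the symmetric locus---so it agrees with the approach the paper cites.
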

The ring $R$ associated to the pair $(A,B)$ is the one corresponding by the Delone-Faddeev-Gross parametrization of cubic rings to $F_{(A,B)}$. Let us denote the correspondence from Theorem \ref{CorrsTrip} as
 $$\Phi: W_{\Z}/G_{\Z}\longrightarrow \mathcal{R}$$
where $W_{\Z}$ denotes the set of nondegenerate elements in  $\mathbb{Z}^2\otimes \textnormal{\textrm{Sym}}^2 \mathbb{Z}^3$.\\
 
Now there is also a natural map 
\[T: \mathcal{R} \longrightarrow (\textrm{Sym}^2 \mathbb{Z}^3)^*/{\rm SL }_3(\Z)\]
taking the equivalence class of $(R,I,\delta)$ to equivalent class of the integral quadratic form obtained by restricting of the trace form $\tr(x^2/\delta)$ to $I$. It is easy to check that $T$ is  discriminant preserving, i.e., $\d(R)=\d\left(I,\tr(x^2/\delta)\right)$.\\

Finally, notice that there is  map connecting the two previous theorems. Indeed, let $V_{\Z}$ denote the set of nondegenerate elements in $(\mathbb{Z}^2\otimes \textnormal{\textrm{Sym}}^2 \mathbb{Z}^3)^*$. Then we have the natural map
\[V_{\Z}/G_{\Z}\rightarrow W_{\Z}/G_{\Z}\]
taking the orbit of $(A,B)\in V_{\Z}$ to the orbit of $(2A,2B) \in W_{\Z}$.

\subsection{Proof of the Conjecture}
All these maps fit together in the following diagram:\\

\hspace*{3cm}
\begin{tikzpicture}[scale=2.4]
\node (B) at (0,2) {$V_{\Z}/G_{\Z}$};
\node (C) at (0,1) {$W_{\Z}/G_{\Z}$};
\node (D) at (0,0) {$\mathcal{R}$};
\node (E) at (2,0) {$(\textrm{Sym}^2 \mathbb{Z}^3)^*/{\rm SL_{3}}(\Z)$};

\path[->,font=\scriptsize,>=angle 90]
(B) edge node [right]{}(C)
(C) edge node [left]{$\Phi$}(D)
(B) edge [bend left] node [right] {$\mathcal{Q}$} (E)
(D) edge node[above]{$T$} (E);

\end{tikzpicture}

\noindent Surprisingly enough we get the following.

\begin{lemma}\label{DiagConm}
The above diagram is commutative.
\end{lemma}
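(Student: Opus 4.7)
The plan is to trace an arbitrary $(A,B)\in V_{\Z}$ along both routes and exhibit an $\textrm{SL}_3(\Z)$-equivalence of the resulting ternary quadratic forms.

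First, I would unwind the two images. Along the $\mathcal{Q}$ route, Theorem~\ref{Corres} attaches a quartic ring $Q$ to $(A,B)$, and $\mathcal{Q}(A,B)$ is the form $\tfrac14\tr_{Q/\Z}(x^2)$ restricted to $M := \{x \in \Z + 4Q : \tr_{Q/\Z}(x) = 0\}$; an explicit Gram matrix in the entries $a_{ij},b_{ij}$ is given in the appendix to Chapter~5 of~\cite{bhargavaT} and in~\cite{piperH}. Along the $T\circ \Phi$ route, $(A,B)$ is first sent to $(2A,2B) \in W_{\Z}$; since $F_{(2A,2B)}=2\,f_{(A,B)}$, the Delone-Faddeev-Gross ring $R'$ attached to $F_{(2A,2B)}$ sits as the suborder $\Z + 2R^0$ of the cubic resolvent $R=R_{f_{(A,B)}}$ of $(A,B)$, where $R^0$ denotes the trace-zero submodule of $R$. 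Applying Bhargava's explicit formula for $\Phi$ in~\cite{bhargavaII} to a pair of the form $(2A,2B)$ then yields a triple $(R',I,\delta)$ with $\delta = 4 \in R'$ and $I\subset R'$ a prescribed rank-$3$ ideal; the compatibility conditions $I^2\subset 4R'$ and $N(I)=8$ then follow automatically.

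Next, I would identify $(I, \tr(x^2/4))$ with $(M, \tfrac14 \tr_{Q/\Z}(x^2))$. Because $\delta=4\in\Z$, the form on $I$ equals $\tfrac14 \tr_{R'/\Z}(x^2)$, and the task reduces to exhibiting an isometry of $\Z$-lattices between $(M, \tr_{Q/\Z})$ and $(I, \tr_{R'/\Z})$. The natural candidate is Bhargava's quadratic resolvent mapping $Q/\Z \to R/\Z$ from the definition of cubic resolvent in~\cite[\S 3.9]{bhargavaIII}: a direct check at the level of integral bases should show that, after the built-in scaling by~$4$, it carries $M\subset Q\otimes\Q$ isomorphically onto $I\subset R'\otimes\Q$ and intertwines the quartic and cubic trace forms on these lattices.

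The main obstacle is the explicit verification in the previous step. Both Gram matrices are given by long polynomial expressions in the twelve entries $a_{ij},b_{ij}$, so the cleanest approach is to fix a universal symbolic pair $(A,B)$, compute both Gram matrices from the formulas in~\cite{bhargavaII,bhargavaT,piperH}, and exhibit a single matrix in $\textrm{SL}_3(\Z[a_{ij},b_{ij}])$ conjugating one into the other. The $G_{\Z}$-equivariance of the top arrow $V_{\Z}\to W_{\Z}$, of $\Phi$, and of $T$ then guarantees that this universal identity descends to an equality of $\textrm{SL}_3(\Z)$-orbits for every $(A,B)\in V_{\Z}$, which is exactly the commutativity claimed.
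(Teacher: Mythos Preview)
Your eventual fallback---compute both Gram matrices symbolically in the twelve entries $a_{ij},b_{ij}$ and compare---is exactly what the paper does, and indeed the authors remark that they ``are only able to prove this by a direct computation.'' In the paper the computation is carried all the way through: using Bhargava's explicit multiplication table for the triple attached to $(2A,2B)$ one finds a basis $\langle\alpha_1,\alpha_2,\alpha_3\rangle$ of $I$ with $\alpha_i\alpha_j=\delta\,(c_{ij}+b_{ij}^*\omega+a_{ij}^*\theta)$, where $(a_{ij}^*)=2A$, $(b_{ij}^*)=2B$, and $C=(c_{ij})$ is an explicit polynomial matrix; then $\bigl(\tr(\alpha_i\alpha_j/\delta)\bigr)=3C+b(2B)-c(2A)$ is expanded and matched entry by entry with the matrix $(Q_{ij})$ from~\cite{piperH}. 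No conjugating matrix is needed---the two Gram matrices coincide on the nose in these bases.

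Where your proposal goes wrong is in the conceptual scaffolding you build before the computation. First, the claim that the triple attached to $(2A,2B)$ has $\delta=4\in\Z$ is unsupported; in Bhargava's construction $\delta$ is a genuine element of $R\otimes\Q$ determined (up to the equivalence in Theorem~\ref{CorrsTrip}) by the multiplication table of $I$, and there is no reason it should be a rational scalar. Second, the suborder attached to $2f$ is $\Z+2R=\langle 1,2\omega,2\theta\rangle$, not $\Z+2R^{0}$; the Delone--Faddeev basis $\omega,\theta$ is not trace-zero (here $\tr(\omega)=b$, $\tr(\theta)=-c$). Most seriously, Bhargava's quadratic resolvent mapping $Q/\Z\to R/\Z$ is a \emph{quadratic} map into a rank-$2$ module, so it cannot furnish a $\Z$-linear isometry from the rank-$3$ lattice $M$ onto the rank-$3$ lattice $I$; it is not a candidate for ``intertwining the trace forms.'' These issues do not affect the endgame computation, but they mean the outline as written does not stand on its own: strip away the conceptual layer and what remains is precisely the brute-force verification the paper performs.
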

 
 \begin{proof}
 
For the moment we are only able to prove this by a direct computation. Let  $(A,B) \in V_{\Z}$ have coefficients given by
\[2\cdot(A,B)=\left(\begin{bmatrix}
    2a_{11}& a_{12} & a_{13}\\
    a_{12}& 2a_{22} & a_{23} \\
    a_{13}& a_{23} & 2a_{33} 
\end{bmatrix},\begin{bmatrix}
    2b_{11}& b_{12} & b_{13}\\
    b_{12} & 2b_{22} & b_{23} \\
    b_{13} & b_{23} & 2b_{33} 
\end{bmatrix}\right),\]

\noindent and suppose that its class corresponds by Theorem \ref{Corres}  to the class of the pair $(Q,R')$.\\

We need to show that if $(R,I,\delta)$ represents  the class corresponding to  the image of the class of $(2A,2B) \in W_{\Z}$ under $\Phi$, then for some basis of $\langle \alpha_1,\alpha_2,\alpha_3 \rangle$ of $I$ the matrix  $({\rm Tr}(\frac{\alpha_i \alpha_j}{\delta}))$ coincides with the matrix $(Q_{ij})$ in \cite[Pag. 44-45]{piperH}, which is the Gram matrix of $\mathcal{Q}(A,B)$ in an explicit basis of $Q^{\bot}$.\\

The ring $R$ corresponds under the Delone-Faddeev parametrization to the binary cubic form
\[ {\rm det}(2Ax-2By)=ax^3+b x^2 y+ c xy^2+dy^3\]
which means that $R'$ has a basis $\langle 1, \omega, \theta \rangle$  such that the ring structure of $R$ is given by

\begin{align*}
    \omega \theta &= - ad\\
    \omega^2 &=-ac+b \omega -a \theta\\
    \theta^2 &=-bd+d \omega -c \theta \\
\end{align*}
In particular, ${\rm Tr}(\omega)=b$ and ${\rm Tr}(\theta)=-c$.
Now the ideal $I$ has a $\Z$-basis $\langle \alpha_1,\alpha_2,\alpha_3 \rangle$  such that
\[\alpha_i \alpha_j = \delta( c_{ij}+ b_{ij}^* \omega +  a_{ij}^*\theta), \]

where $2A=:(a_{ij}^*)$,  $2B=:(b_{ij}^*)$ and the constants $c_{ij}$ are uniquely determined from the associative law on triple products $(\delta^{-1}\alpha_i )(\delta^{-1}\alpha_j )(\delta^{-1}\alpha_k )$, see \cite[Section 3.1.4]{bhargavaT}. More specifically, these constants are given in terms of $(a_{ij}^*)$ and $(b_{ij}^*)$ by \cite[Equation (3.8)]{bhargavaT}. Alternatively, we can also describe  $C=(c_{ij})$ explicitly as

\[C:={\rm Adj}({\rm Adj}(2A)+{\rm Adj}(2B))- {\rm det}(2A) (2A)-{\rm det}(2B)( 2B) , \]
where ${\rm Adj}(X)$ denotes the adjoint of a matrix $X$. Thus in this basis 
\[ \left( {\rm tr}\left(\frac{\alpha_i \alpha_j}{\delta}\right) \right)=3C+b(2B)-c(2A).\]

\noindent On expanding the right hand side in terms of the $a_{ij}$'s  and $b_{ij}$'s, we find that this matrix coincides with the matrix $(Q_{ij})$ in  \cite[Pag. 44-45]{piperH}.

 \end{proof}

It follows that in order to prove  \cite[Conjecture 2.10]{Man2} all we have to do is prove that $T$ is injective when restricted to the equivalence classes of triples coming from totally real quartic fields of some fixed square free discriminant, say $d$. Denote this subset of $\mathcal{R}$ as $\mathcal{R}(d)$, then the conjecture  follows from. 

\begin{theorem}\label{InjectivityTheorem}
Let $(R,I,\delta)$, $(S,J,\epsilon)$ be triples representing classes in $\mathcal{R}(d)$. If an isomorphism of quadratic modules
$$\left(I,\textnormal{\tr}(x^2/\delta)\right)\cong\left(J,\textnormal{\tr}(x^2/\epsilon)\right) $$
exists, $(R,I,\delta)$ and $(S,J,\epsilon)$ are equivalent.
\end{theorem}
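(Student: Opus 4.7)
The plan is to lift the given quadratic isometry $\phi$ to an equivalence of arithmetic triples by combining the cubic case of the conjecture with the integral rigidity of triples $(R,I,\delta)$ afforded by Bhargava's parametrization. Write $F = R\otimes\mathbb{Q}$ and $G = S\otimes\mathbb{Q}$, which by hypothesis are totally real cubic fields of square-free fundamental discriminant $d$, and extend $\phi$ linearly to a $\mathbb{Q}$-rational isometry $\bar\phi \colon (F,\tr(x^2/\delta))\xrightarrow{\sim}(G,\tr(x^2/\epsilon))$.

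The first step is to show $F\cong G$. The natural tool is the cubic case of the conjecture (\cite[Theorem 6.5]{Man}): a totally real cubic field of fundamental discriminant is determined by its integral trace-zero form. To invoke it, one must extract from $(I,\tr(x^2/\delta))$ a canonical invariant equivalent to the trace-zero form of $R$. My route would be via the cubic resolvent binary form $F_{(A,B)}=\det(Ax-By)$, which under Delone--Faddeev--Gross parametrization determines $R$ and whose Hessian is a rational multiple of the trace-zero form of $R$. Tracking how the $\mathrm{SL}_{3}(\mathbb{Z})$-orbit of $T(R,I,\delta)$ constrains the $G_{\mathbb{Z}}$-orbit of the pair $(A,B)\in W_{\mathbb{Z}}$ representing $(R,I,\delta)$ should force $\mathrm{GL}_{2}(\mathbb{Z})$-equivalence of the cubic resolvent forms, and hence $R\cong S$ by the cubic case.

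Once an isomorphism $\psi\colon R\xrightarrow{\sim} S$ is fixed, transport $\bar\phi$ via $\psi$ to a $\mathbb{Q}$-linear isometry of $(F,\tr(x^2/\delta))$ with $(F,\tr(x^2/\psi^{-1}(\epsilon)))$. The last step is to show that such a $\mathbb{Q}$-linear isometry which in addition sends the integral lattice $I$ onto another integral lattice of $F$ must be of the form $x\mapsto\kappa\,\sigma(x)$ for some $\kappa\in F^{\times}$ and some field automorphism $\sigma$ of $F$. Absorbing $\sigma$ into $\psi$, one reads off a $\kappa\in F^{\times}$ with $\kappa\psi(I)=J$ and $\kappa^{2}\psi(\delta)=\epsilon$, which is precisely the equivalence of triples demanded by Theorem \ref{CorrsTrip}.

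The main obstacle is the first step. Although $\tr(x^2/\delta)$ and the standard trace form on $F$ share the same rational determinant (thanks to $N(\delta)=N(I)^{2}$), they are generally non-isometric over $\mathbb{Q}$, and recovering $R$ from the twisted form requires exploiting both the principality of $(\delta)=I^{2}$---which forces $[I]$ to be $2$-torsion in the narrow class group of $R$---and the ambient hypotheses that $d$ is square-free and $F$ totally real. A secondary difficulty is ruling out ``exotic'' $\mathbb{Q}$-linear isometries of the twisted trace form (those not of the multiplicative-plus-Galois shape described above); these must be discarded by using the integrality constraint $\phi(I)=J$, which couples the bare quadratic form to the multiplicative structure of the triple encoded in Bhargava's pair $(A,B)$.
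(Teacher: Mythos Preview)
Your outline diverges substantially from the paper's proof, and while the two-step architecture (first show $F\cong G$, then pin down the isometry as multiplication-plus-Galois) is sensible, neither step is actually carried out, and the first step in particular has a genuine gap.

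\medskip

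\textbf{On the first step.} You propose to recover the cubic resolvent form $F_{(A,B)}$ (hence $R$) from the $\mathrm{SL}_{3}(\mathbb{Z})$-class of $T(R,I,\delta)$, and then invoke \cite[Theorem 6.5]{Man}. But recovering the $G_{\mathbb{Z}}$-orbit of $(A,B)$ from the $\mathrm{SL}_{3}(\mathbb{Z})$-orbit of its covariant $T$ is essentially the injectivity of $T$ that the theorem asserts; the phrase ``tracking how the orbit of $T(R,I,\delta)$ constrains the orbit of $(A,B)$'' is not a mechanism, and as stated the argument is circular. Note also that $R$ here is \emph{not} the maximal order of $F$: it has conductor $2O_{F}$, so even if you could produce the trace-zero form of $R$ from the twisted form on $I$, you would still need to pass to $O_{F}$ before the cubic case applies. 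You flag these difficulties yourself, but you do not resolve them.

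\medskip

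\textbf{On the second step.} Even granting $R\cong S$, the claim that a $\mathbb{Q}$-linear isometry of twisted trace forms carrying $I$ onto $J$ must be of the form $x\mapsto\kappa\,\sigma(x)$ is nontrivial and is precisely the content of the paper's argument in the case $K\cong L$. You offer no method here.

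\medskip

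\textbf{What the paper does instead.} The paper bypasses both obstacles with a single uniform tool: Casimir invariants. For embeddings $\sigma_{i}$ of $K$ and $\tau_{j}$ of $L$ one forms $c_{ij}=\langle\sigma_{i},\tau_{j}\phi\rangle_{\tr_{K/\mathbb{Q}}}\sqrt{\sigma_{i}(\delta)/\tau_{j}(\epsilon)}$ and proves, by a local computation at every prime (the case $p=2$ requiring the conductor structure of $R$), that each $c_{ij}^{2}$ is an algebraic integer. Since $\phi$ is an isometry, the matrix $(c_{ij})$ is orthogonal, so all $c_{ij}^{2}\le 1$. If $K\not\cong L$ linear disjointness makes the $c_{ij}^{2}$ conjugate to one another, forcing $c_{11}^{2}\in\{0,1\}$ and contradicting nonsingularity; if $K\cong L$ the same bound forces $(c_{ij}^{2})=(\delta_{ij})$, whence $\phi(x)=\kappa x$ with $\epsilon=\kappa^{2}\delta$ and $J=\kappa I$. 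Thus the paper proves $F\cong G$ and identifies $\phi$ simultaneously, without ever appealing to the cubic case of the conjecture.
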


It is convenient to identify some properties of the elements in $\mathcal{R}(d)$, before we give the proof. Start with a pair of integral ternary quadratic forms $(A,B) \in (\mathbb{Z}^2 \otimes \textrm{Sym}^2 \mathbb{Z}^3)^*$ corresponding under $\Psi$ to the pair $(Q,R')$, where $Q$ is the maximal order in a totally real quartic field $K$ of discriminant $d$, and let $(R,I,\delta)$ be a triple corresponding under $\Phi$  with $(2A,2B)$. Then,
\begin{itemize}
\item 
 Since $\d(R')=d$ is square-free, the ring $R'$ is the maximal order in the cubic resolvent field of $K$. Also, $R'$  the cubic ring corresponding to $f_{(A,B)}$ and $R$ is the cubic ring corresponding to $F_{(2A,2B)}=2f_{(A,B)}$; this means that if $\langle 1,\omega,\theta\rangle$ is a normalized $\mathbb{Z}$-basis of $R'$, then $\langle 1,2\omega,2\theta \rangle$ is a normalized basis of $R'$. In particular, $R$ is an order of conductor $\mathfrak{c}=2R'$ in $R'$. Note that given $x=r+2s \omega+2t \theta \in R$, with $r,s,t \in \Z$; we have $x\in 2R'$ if and only if $r \equiv \tr(x)\equiv 0 \mod 2$, also notice that $\tr(x^2) \equiv r^2 \equiv r \equiv \tr(x) \mod 2 $.
\item Since $Q$ is totally real, the pair $(A,B)$ possesses $4$ zeros in $\mathbb{P}^2(\mathbb{R})$ and so does $(2A,2B)$, which means that $\delta$ is totally positive (see \cite[Lemma $21$]{bharVar}).
\item We claim that there is a $\kappa \in R\otimes \mathbb{Q}$, such that $\kappa I$ is an integral ideal in $R$  prime to the conductor $\mathfrak{c}=2R'$. Indeed, take a $\mathbb{Z}$-basis $\{1,\gamma_1,\gamma_2,\gamma_3\}$ of $Q$ and let $t_i:=\tr(\gamma_i)$. Then $\{4\gamma_i-t_i\}$ is a basis of $Q^{\bot}$ and if $(\mathcal{Q}_{ij})$ is the Gram matrix of $\frac{1}{4}\tr(x^2)$ in this basis, then $$\mathcal{Q}_{ii} \equiv t_i \mod 4.$$
Since $d$ is square free, then $(4,t_1,t_2,t_3)=\tr(Q)=\mathbb{Z}$ (see Corollary $\ref{TraceZ}$). Thus at least one of $\mathcal{Q}_{ii}$ must be odd, say $\mathcal{Q}_{11}$. Next, according to Lemma \ref{DiagConm}, $(\mathcal{Q}_{ij})$ is the Gram matrix of $\tr(x^2/\delta)$ in some basis $\{\alpha_1,\alpha_2,\alpha_3\}$ of $I$ and so $\kappa:=\alpha_1/\delta$ is the constant we are looking for. This is because if
$$\frac{\alpha_1^2}{\delta}=r+s(2\omega)+t(2\theta),$$
then $1\equiv\mathcal{Q}_{11} = 3r\equiv r \mod 2$, so $\kappa I\subset \delta^{-1}I^2\subset R$ is an integral ideal such that 
$$1=\frac{\alpha_1^2}{\delta}+\left(1-\frac{\alpha_1^2}{\delta}\right)$$
with $\frac{\alpha_1^2}{\delta} \in \kappa I$ and $1-\frac{\alpha_1^2}{\delta}\in \mathfrak{c}=2R'$.\\

We have proved that $(R,I,\delta)$ is equivalent to a triple $(R,I',\delta')$ where $I'$ is an integral ideal prime to the conductor. This implies, by the same proof given for maximal orders, that if we fix any ideal $\mathfrak{a}$ in $R$ prime to the conductor, then $(R,I,\delta)$  is equivalent to a triple $(R,I'',\delta'')$ where $I''$ is integral prime to the conductor and prime to $\mathfrak{a}$. 
\end{itemize}

\begin{proof}[Proof of Theorem \ref{InjectivityTheorem}] Let $K:=R\otimes\mathbb{Q}$ and $L:=S\otimes \mathbb{Q}$. Choose $I$ and $J$ to be prime the conductor of $R$ and $S$, respectively, and to $d$. This implies, thanks to $I$ and $J$ being invertible, that $I^2=(\delta)$ and $J^2=(\epsilon)$. The isometry can be extended to a rational isometry
$$\phi:\left(K,\textnormal{\tr}(x^2/\delta)\right)\xrightarrow{\sim}\left(L,\textnormal{\tr}(x^2/\epsilon)\right)$$
Let $\sigma:K\hookrightarrow \mathbb{R}$ and $\tau:L \hookrightarrow \mathbb{R}$ be embeddings (recall that  $K$ and $L$ are totally real). We claim that 
$c:=\langle \tau,\sigma\phi \rangle_{\tr_{K/\mathbb{Q}}} \sqrt{\frac{\sigma(\delta)}{\tau(\epsilon)}}$, see \S \ref{CasimirInv}, is an algebraic integer. Similarly to  \cite[Remark 4.4]{Casimir}, we see that it is enough to prove it when $\sigma$ and $\tau$ are some fixed inclusions $\iota_K:K \hookrightarrow \mathbb{R}$ and $\iota_L: L \hookrightarrow \mathbb{R}$ which we use to identify $K$ and $L$ with subsets of $\mathbb{R}$. The strategy is the same as in  \cite[Proposition 4.1]{Casimir}; we prove that 
\begin{equation}\label{pint}
  c^2\otimes1\in O_{KL}\otimes \mathbb{Z}_p, 
\end{equation}

\noindent for all primes $p$. \\

 Let $\langle \alpha_{1}, \alpha_2 ,\alpha_3 \rangle$ be a basis of $I O_K$ and let  $\langle \alpha_{1}^*, \alpha_2^* ,\alpha_3^*\rangle$ be its dual basis in $K$ with respect to the trace form. Then by definition and the fact that Casimir invariants are compatible with tensor products (see \cite[Proposition 2.8(iii)]{Casimir}) we have  
\begin{equation} \label{epdel}
  \frac{\epsilon}{\delta} c^2\otimes 1=\left( \sum_{i=1}^{3}  (\alpha_i^*\otimes 1) (\phi\otimes 1)(\alpha_i\otimes 1) \right)^2.
\end{equation}
Now  $\langle \alpha_{1}^*, \alpha_2^* ,\alpha_3^*\rangle$ generates the dual ideal of $(I O_K)$, i.e, the fractional ideal \[\{x\in K : {\rm Tr}_{K/\mathbb{Q}}(x (IO_K))\subset \Z \}=(I O_K)^{-1} \mathcal{D}_K^{-1}\] (here $\mathcal{D}_K$ is the different ideal of $K$).\\

Now suppose $p\nmid 2d$. As $p \neq 2$ we have $I \otimes \Z_p=(IO_K) \otimes \Z_p$ and $J \otimes \Z_p=(JO_L) \otimes \Z_p$. Moreover, $p\nmid d=\d(K)$ implies that $\mathcal{D}_K \otimes \Z_p= O_K\otimes \Z_p$, thus it follows that each $\alpha_i^{*}\otimes 1$ belongs to $(IO_K)^{-1}\otimes \Z_p$, and since $\phi \otimes 1$ maps $I \otimes \Z_p$ into $J\otimes \Z_p$ we conclude that the right hand side of (\ref{epdel}) belongs to

\[ ((I O_K)^{-1} (JO_L)\otimes \Z_p)^2 \subset (I^{-1} J O_{KL} \otimes \Z_p)^2= \frac{\epsilon}{\delta} O_{KL} \otimes \Z_p .\]
Therefore $c^2\otimes 1 \in O_{KL} \otimes \Z_p$. The case $p\mid d$ is a straightforward adaptation of the proof in \cite[Proposition 4.1]{Casimir}, since in this case $\delta$, $\epsilon$  are prime to $p$ and $p\neq 2$, so $I\otimes \mathbb{Z}_p=R\otimes \mathbb{Z}_p=O_K\otimes \mathbb{Z}_p$ and $J \otimes \Z_p=S \otimes \Z_p=O_L \otimes \Z_p$.\\

It remains to prove (\ref{pint}) when $p=2$. Since $\delta$,$\epsilon$ and $\mathcal{D}_K$ are coprime to $2$, it would be enough to show that $\phi\otimes 1$ maps $O_K\otimes \mathbb{Z}_2$ into $O_L\otimes \mathbb{Z}_2$. Indeed, we can compute the right hand side of $(3)$ using a fixed $\Z$-basis $\langle \alpha_1, \alpha_2, \alpha_2 \rangle$ of $O_K$ (recall that the Casimir invariant is independent of the choice of the $\Q$-basis of $K$). Then $\alpha_i^*\otimes 1 \in O_K \otimes \Z_2$ and $(\phi\otimes1)(\alpha_i \otimes 1) \in O_L \otimes \Z_2$, showing that $c^2$ is $2$-integral.\\

To prove  $(\phi \otimes 1)(O_K \otimes \Z_2) \subset O_L \otimes \Z_2$, we will use the fact that given $z \in S \otimes \Z_2$ we have $z\in 2 (O_L \otimes \Z_2)$ if and only if $\tr(z^2)\equiv \tr(z) \equiv 0 \mod 2$ (see the first remark before the proof). Let $x \in O_K$, then $2x \otimes 1 \in R \otimes \Z_2=I \otimes \Z_2$ and so $y:=\phi(2x) \otimes 1=(\phi\otimes 1)(2x \otimes 1) \in J\otimes \Z_2=S \otimes \Z_2$. Thus $z:=\frac{\phi(2x)^2}{\epsilon}\otimes 1=\frac{y^2}{\epsilon \otimes 1} \in S \otimes \Z_2$, moreover,

\[\tr(z)=\tr\left(\frac{\phi(2x)^2}{\epsilon}\otimes 1\right)=\tr\left(\frac{(2x)^2}{\delta} \otimes 1\right)= 4\, \tr\left(\frac{x^2}{\delta} \otimes 1\right)\equiv 0 \mod 4 \mathbb{Z}_2,\]

\noindent because $\frac{x^2}{\delta} \otimes 1 \in O_K \otimes \Z_2$. In particular, $\tr(z) \equiv 0 \mod 2$, and as we already knew that $z \in S \otimes \Z_2$, it follows that $z \in 2(O_L \otimes \Z_2)$. Hence  $y^2 \in  2(O_L\otimes \mathbb{Z}_2)$,  which implies  $y=2(\phi\otimes 1)(x \otimes 1) \in 2(O_L\otimes \mathbb{Z}_2)$, because $\tr(y)\equiv \tr(y^2) \equiv 0 \mod 2$ and $y \in S \otimes \Z_2$. Hence  $(\phi \otimes 1)( x \otimes 1) \in O_L\otimes \mathbb{Z}_2$. This finishes the proof of $(2)$.\\

Next we have two cases:
\begin{itemize}
\item If $K \not \cong L$, by \cite[Corollary 3.3]{Casimir}, $K$ and $L$ are linearly disjoint, and if $\{\sigma_1,\sigma_2,\sigma_3\}$ and $\{\tau_1,\tau_2,\tau_3\}$ are the embeddings of $K$ and $L$, respectively, with $\sigma_1$ and $\tau_1$ the inclusions $\iota_K$ and $\iota_L$; then for each $1\leq i,j\leq 3$ there exists a unique embedding $\theta_{ij}:KL\hookrightarrow \mathbb{R}$ extending both $\sigma_i$ and $\tau_j$. \\

Let $c_{ij}:=\langle \sigma_i,\tau_j\phi \rangle_{\tr_{K/\mathbb{Q}}} \sqrt{\frac{\sigma_i(\delta)}{\tau_j(\epsilon)}} \in \mathbb{R}$, since $\phi$ is an isometry the matrix $U=(c_{ij})$ must be orthogonal. Indeed, let $\langle \alpha_1,\alpha_2,\alpha_3\rangle$ be any $\Q$-basis of $K$ with dual basis $\langle \alpha_1^*,\alpha_2^*\alpha_3^*\rangle$ with respect to the trace form. Let $\beta_i=\phi(\alpha_i)$, $i=1,2,3$; $A=(\sigma_{j}(\alpha_i))$, $A':=(\sigma_{i}(\alpha_j^*))$, $B=(\tau_j(\beta_i))$, $\Delta:={\rm diag}\left(\frac{1}{\sqrt{\sigma_1(\delta)}},\frac{1}{\sqrt{\sigma_2(\delta)}},\frac{1}{\sqrt{\sigma_3(\delta)}}\right)$ and $\mathcal{E}:={\rm diag}\left( \frac{1}{\sqrt{\tau_1(\epsilon)}},\frac{1}{\sqrt{\tau_2(\epsilon)}},\frac{1}{\sqrt{\tau_3(\epsilon)}}\right)$. Then,

\[A \Delta ^2 A^t=\left(\tr\left(\frac{\alpha_i \alpha_j}{\delta}\right)\right)=\left(\tr\left(\frac{\beta_i \beta_j}{\epsilon}\right) \right)=B \mathcal{E}^2 B^t,\]

shows that $\Delta^{-1} A^{-1}B \mathcal{E}=\Delta^{-1} A'B \mathcal{E}=U$ is orthogonal.\\

However, $\theta_{ij}(c_{11}^2)=c_{ij}^2\leq 1$, so $c_{11}^2$ is a positive real algebraic integer all whose conjugates are bounded by $1$ and thus $c_{11}^2 \in \{0, 1\}$. This contradicts that $U=(c_{ij})=(c_{11})$ is nonsingular.

\item If $K \cong L$, then $O_K \cong O_L$. So the integral ternary quadratic forms defining the quartic rings form which $(R,I,\delta)$ and $(s,J,\epsilon)$ come from have equivalent cubic resolvent forms $f$, hence the corresponding cubic forms $F=2f$ are equivalent  and thus $R \cong S$. By changing $S,J$ and $\epsilon$ by their images in $R$ under this isomorphism if necessary, we may assume $R=S$.

 Let 
$c_{ij}:=\langle \sigma_i,\sigma_j\phi \rangle_{\tr_{K/\mathbb{Q}}} \sqrt{\frac{\sigma_i(\delta)}{\sigma_j(\epsilon)}}\in \mathbb{R}$
, as before we have that $U=(c_{ij})$ is orthogonal and $c_{ij}^2\leq 1$  for all $i,j$. Now let $\widetilde{K}$ be the Galois closure of $K$, for every $\sigma \in \textrm{Gal}(\widetilde{K}/\mathbb{Q})$ and $i,j$ we have that 
$$\sigma(c_{ij}^2)=c_{i'j'}^2\leq 1$$
for some $i',j'$, thus here again we find $c_{ij}^2 \in \{0,1\}$, moreover, since $U$ is orthogonal exactly one of the $c_{ij}^2$ is $1$ on each column and  row of $U$ and from the relation
$$\sigma_i\phi=\sum_{j} \langle \sigma_j,\sigma_i\phi \rangle_{\tr_{K/\mathbb{Q}}} \sigma_i $$
follows that $c_{ij}^2=\delta_{ij}$ (Kronecker delta). In particular, if $\kappa=\langle \sigma_1,\sigma_1\phi \rangle_{\tr_{K/\mathbb{Q}}}\in K$
$$1=c_{11}^2=\kappa^2\;\frac{\delta}{\epsilon}$$
hence $\epsilon=\kappa^2 \delta$ and, as $\phi(x)=\kappa x$, $J=\kappa I$. Therefore, the triples $(R,I,\delta)$ and $(R,J,\epsilon)$ are equivalent.
\end{itemize}

\end{proof}

\section{Acknowledgments}

We would like to thank the referee for the careful reading of the paper, and for their helpful comments.

\noindent
{\footnotesize Carlos Rivera-Guaca, Department of Mathematics, University of Washington,
Seattle, USA. ({\tt caariv@uw.edu})}

\noindent
{\footnotesize Guillermo Mantilla-Soler, Department of Mathematics, Universidad Konrad Lorenz,
Bogot\'a, Colombia. Department of Mathematics and Systems Analysis, Aalto University, Espoo, Finland. ({\tt gmantelia@gmail.com})}

\end{document}